\newtheorem{problem}{Problem}
\newtheorem{theorem}{Theorem}
\newtheorem{proposition}[theorem]{Proposition}
\newtheorem{definition}[theorem]{Definition}
\newcommand{\II}{\mathrm{I\!I}}
\newcommand{\mcal}{\mathcal{M}}
\newcommand{\ucal}{\mathcal{U}}
\newcommand{\ecal}{\mathcal{E}}
\newcommand{\wcal}{\mathcal{W}}
\newcommand{\Rbb}{\mathbb{R}}
\newcommand{\pa}[1]{\left(#1\right)}
\newcommand{\pabigg}[1]{\bigg(#1\bigg)}
\newcommand{\pac}[1]{\left[#1\right]}
\newcommand{\paa}[1]{\left\{#1 \right\}}
\newcommand{\norm}[1]{\left\lVert#1\right\rVert}
\newcommand{\normSmall}[1]{\lVert #1 \rVert}
\newcommand{\normbig}[1]{\big\lVert#1 \big\rVert}
\newcommand{\rest}[1]{{\big\vert}_{#1}}
\newcommand{\Retr}{\mathrm{R}}
\newcommand{\scalp}[2]{\left\langle #1,#2 \right\rangle}
\newcommand{\map}[5]{\begin{aligned}#1 \,:\, #2 &\to #3 \\ #4 &\mapsto #5\end{aligned}}
\newcommand{\dd}{\mathrm{d}}
\newcommand{\D}{\mathrm{D}}
\newcommand{\ddt}[2]{\frac{\dd #1}{\dd #2}}
\newcommand{\Exp}{\mathrm{Exp}}
\newcommand{\PiOnM}{\Pi_\mcal}
\newcommand{\PiOnMrOf}[1]{\Pi_{\mcal_r}\hspace*{-3pt}\pa{#1}}
\newcommand{\matr}[2]{\Rbb^{#1 \times #2}}
\newcommand{\minOn}[1]{\underset{#1}{\min}}
\newcommand{\maxOn}[1]{\underset{#1}{\max}}
\newcommand{\underbraceWith}[2]{\underset{#1}{\underbrace{#2}}}
\newcommand{\Rmn}{\matr{m}{n}}
\newcommand{\Rnn}{\matr{n}{n}}
\newcommand{\opt}{\mathrm{opt}}
\newcommand{\tspace}[2]{T_{#1}\hspace*{-1pt}#2}
\newcommand{\Deltat}{\Delta t}
\newcommand{\Diff}[3]{\D #1\hspace*{-0.06cm}\pa{#2}\hspace*{-0.06cm}\pac{#3}}
	\newcommand{\revision}[1]{\textcolor{blue}{#1}}
	\newcommand{\revision}[1]{#1}
\title{From low-rank retractions \\  to dynamical low-rank approximation \\ and back} 
\author{Axel Séguin\thanks{École Polytechnique Fédérale de Lausanne (EPFL) Institute of Mathematics, CH-1015 Lausanne, Switzerland (axel.seguin@epfl.ch, daniel.kressner@epfl.ch)} \and Gianluca Ceruti\thanks{University of Innsbruck, Numerical Analysis and Scientific Computing, Innsbruck, Austria, gianluca.ceruti@uibk.ac.at. Parts of this work were performed while this author was part of the EPFL Institute of Mathematics, and it was supported by the SNSF research project \emph{Fast algorithms from low-rank updates}, grant number: 200020\_178806.} \and Daniel Kressner\footnotemark[1]}
\date{}
\begin{document}
	
	\maketitle
	
\begin{abstract}
In algorithms for solving optimization problems constrained to a smooth manifold, retractions are a well-established tool to ensure that the iterates stay on the manifold. More recently, it has been demonstrated that retractions are a useful concept for other computational tasks on manifold as well, including interpolation tasks. In this work, we consider the application of retractions to the numerical integration of differential equations on fixed-rank matrix manifolds. This is closely related to dynamical low-rank approximation (DLRA) techniques. In fact, any retraction leads to a numerical integrator and, vice versa, certain DLRA techniques bear a direct relation with retractions.
As an example for the latter, we introduce a new retraction, called KLS retraction, that is derived from the so-called unconventional integrator for DLRA. We also illustrate how retractions can be used to recover known DLRA techniques and to design new ones. In particular, this work introduces two novel numerical integration schemes that apply to differential equations on general manifolds: the accelerated forward Euler (AFE) method and the \revision{Projected Ralston--Hermite} (\revision{PRH}) method. Both methods build on retractions by using them as a tool for approximating curves on manifolds. The two methods are proven to have local truncation error of order three. Numerical experiments on classical DLRA examples highlight the advantages and shortcomings of these new methods.	
\end{abstract}	
	

	\section{Introduction}	\label{s:intro} 

	This work is concerned with the numerical integration of manifold-constrained ordinary differential equations (ODEs) and, in particular, the class of ODEs evolving on fixed-rank matrix manifolds encountered in dynamical low-rank approximation (DLRA)~\cite{kochLubich}. This model-order reduction technique has attracted significant attention in recent years, finding numerous applications in fundamental sciences and engineering~\cite{jahnke08,einkemmer18,peng20,kusch22}. A key feature of DLRA is that it can bring down the computational cost, in terms of storage and computational time, without compromising accuracy when solving certain large-scale \revision{ODEs} as they arise, for instance, from the discretization of partial differential equations. 
	
	Several works have already pointed out the close connection of specific DLRA techniques to retractions on fixed-rank manifolds, the latter featuring prominently in Riemannian optimization algorithms~\cite{absilBook,boumalBook}. For instance, the projector-splitting integrator for DLRA proposed in~\cite{lubichOseledets} is shown in~\cite{absilOseledets} to define a second-order retraction. The other way around, the more frequently encountered fixed-rank retraction defined with the metric projection on the manifold is centrally used in the projected Runge--Kutta methods for DLRA developed in~\cite{kieriVandereycken}. Furthermore, the recently proposed dynamically orthogonal Runge--Kutta~\cite{lermPerturbativeRetr} schemes extend Runge--Kutta schemes to the DLRA setting by making use of high-order approximations to the metric projection retraction. 
	In this work, we consider a unified framework for relating popular DLRA time integration techniques to particular retractions on the fixed-rank manifold. This not only covers the existing relations mentioned above but it also allows us to provide a novel geometric interpretation of the so-called unconventional integrator~\cite{unconventionalIntegrator} by showing that it defines a second-order retraction that coincides with the orthographic retraction \revision{modulo a third-order correction} term.
	
    Low-rank retractions can also be used to design new time integration algorithms on fixed-rank matrix manifolds, based on the insight~\cite{myThesis} that retractions can be used to conveniently generate manifold curves. For instance, any retraction allows one to define a manifold curve passing through a prescribed point with prescribed velocity. If the retraction additionally possesses the so-called second-order property, the acceleration can also be prescribed. In addition, as put forth in~\cite{mySecondPaper}, a retraction can be used to define a smooth manifold curve with prescribed endpoints and prescribed endpoint velocities. We use these two retraction-based curves as the building block for two novel numerical integration schemes for manifold-constrained ODEs and in particular for DLRA, namely the accelerated forward Euler method and the \revision{Projected Ralston--Hermite (PRH)} method. 
 
    \revision{The proposed novel numerical integrators are then tested in two classical scenarios: the Lyapunov differential equation and a synthetic example~\cite[\S 2.1]{kieriLubichWallach}. The first scenario serves as an effective numerical test to examine the impact of model errors, i.e., the magnitude of the normal component of the projected vector field, while the second analyzes the proposed method's robustness in dealing with small singular values, a notable concern in the context of numerical integrators for DLRA. We showcase that the AFE numerical method maintains its robustness when dealing with small singular values but is sensitive to the presence of significant model errors. In contrast, the PRH method is more susceptible to the influence of small singular values but maintains accuracy levels comparable to the second order Projected Runge--Kutta (PRK) scheme~\cite{kieriVandereycken}. Furthermore, for large model errors, it is illustrated that the computational time of the proposed PRH method is comparable to that of the PRK counterpart. The computational time of the accelerated methods is primarily dominated by the computation of the acceleration, namely, the Weingarten map.}

	\paragraph{Outline}  Section~\revision{\ref{s:prelims}} collects preliminary material on differential geometry and DLRA. The link between retractions and existing DLRA techniques is discussed in Section~\ref{s:retractionsAndDLRA}. Then, Section~\ref{s:newRetractionBasedAlgorithms} presents two new retraction-based integration schemes: the Accelerated Forward Euler method in Section~\ref{ss:afe} and the \revision{Projected Ralston--Hermite} integration scheme in Section~\ref{ss:rh}. 
	Numerical experiments reported in Section~\ref{s:numericalExperiments} assess the accuracy and stability to small singular values of both methods and provide performance comparison with state-of-the-art techniques. \revision{In Section~\ref{s:conclusion}, a} discussion on future work directions concludes the manuscript.

	\section{Preliminaries} \label{s:prelims}
	
	\subsection{Differential geometry} \label{s:background}

	In this section, we briefly introduce concepts from differential geometry needed in this work. The reader is referred to~\cite{boumalBook} for details.

	In the following, $\mcal$ denotes a manifold embedded into a finite-dimensional Euclidean space $\ecal$. When necessary, $\mcal$ is endowed with the Riemannian submanifold geometry: for every $x\in\mcal$, the tangent space $T_x\mcal$ is equipped with the induced metric inherited from the inner product of $\ecal$. The orthogonal complement of $T_x\mcal$ in $\ecal$ is the normal space denoted by $N_x\mcal$. The orthogonal projections from $\ecal$ to $T_x\mcal$ and $N_x\mcal$ are respectively denoted by $\Pi(x)$ and $\Pi^{\perp}(x) := (I - \Pi)(x)$. 	
	The disjoint union of all tangent spaces is called the tangent bundle and denoted by $T \mathcal{M}$. Every ambient space point $p\in\ecal$ that is sufficiently close to the manifold admits a unique metric projection onto the manifold~\cite[Lemma 3.1]{projLikeRetractions} and we denote it by $\PiOnM(p) := {\arg\min}_{x\in\mcal}\|p-x\|$.

	For a smooth manifold curve $\gamma:\Rbb\to\mcal$, its tangent vector or velocity vector \revision{at} $t\in\Rbb$ is denoted by $\dot \gamma(t)\in T_{\gamma(t)}\mcal$. If the curve $\gamma$ is interpreted as a curve in the ambient space $\ecal$, or if $\gamma$ maps to a vector space, we use the symbol $\gamma'(t)$ to indicate its tangent vector \revision{at $t\in\Rbb$}. 
	
	The Riemannian submanifold geometry of $\mcal$ is complemented with the Levi-Civita connection $\nabla$ which determines the intrinsic acceleration of a smooth manifold curve $\gamma$ as $\ddot \gamma(t) := \nabla_{\dot\gamma(t)}\dot\gamma(t)$, $\forall\,t\in\Rbb$. We reserve the symbol $\gamma''(t)$ to indicate the extrinsic acceleration of the curve intended as a curve in the ambient space $\ecal$.
	
	\paragraph{Manifold of rank-$r$ matrices.} The leading example in this work is the case where $\ecal = \matr{m}{n}$ and ${\mcal = \mcal_r}$, the manifold of rank-$r$ matrices.
	We represent rank-$r$ matrices and tangent vectors to $\mcal_r$ with the conventions used in~\cite{bart} as well as in the MATLAB library Manopt~\cite{manoptPaper} that was used to perform the numerical experiments. Any $Y\in\mcal_r$ is represented in the compact factored form $Y = U\Sigma V^\top\in\mcal_r$ with $U\in\matr{m}{r}$, ${V\in\matr{n}{r}}$ both column orthonormal and $\Sigma = \mathrm{diag}(\sigma_1,\dots,\sigma_r)$ containing the singular values ordered as ${\sigma_1\geq\dots\geq\sigma_r}$. Any tangent vector $T\in T_Y\mcal_r$ is represented as $T = UMV^\top + U_{\mathrm{p}}V^\top + UV_{\mathrm{p}}^\top$, where $M\in\matr{r}{r}$, $U_{\mathrm{p}}\in\matr{m}{r}$ and $V_{\mathrm{p}}\in\matr{n}{r}$ such that $U^\top U_{\mathrm{p}} = V^\top V_{\mathrm{p}} = 0$.  
	The manifold $\mcal_r$ inherits the Euclidean metric from $\matr{m}{n}$ so each tangent space is endowed with the inner product ${\scalp{W}{T} = \mathrm{Tr}(W^\top T)}$ and the Frobenius norm $\norm{W} = \norm{W}_{\mathrm{F}}$, for any $W,T\in T_Y\mcal_r$. Then, the orthogonal projection onto the tangent space at $Y$ of any $Z\in\Rmn$ is given by
	\begin{equation}
		\label{eq:tangentSpaceProjectionFixedRank}
		\Pi(Y) Z = UU^\top ZVV^\top +  (I-UU^\top) ZVV^\top +  UU^\top Z(I-VV^\top).
	\end{equation} 
	The metric projection onto the manifold $\mcal_r$ of a given $A\in\Rmn$ is uniquely defined when $\sigma_r\pa{A} > \sigma_{r+1}(A)$. It is computed as the rank-$r$ truncated SVD of $A$ and we denote it by $\PiOnMrOf{A}$.
	
	\paragraph{Retractions.}
	A retraction is a smooth map $\Retr:T\mcal \to \mcal:(x,v)\mapsto \Retr_x(v)$ defined in the neighborhood of the origin $x$ of each tangent space $T_x\mcal$ such that it holds that $\Retr_x(0) = x$ and $\D \Retr_x(0)[v] = v$, for all $v\in T_x\mcal$. 
	The defining properties of a retraction can be condensed into the fact that for any $x\in\mcal$ and ${v\in T_x\mcal}$ the map $\tau\in\Rbb\mapsto\sigma_{x,v}(\tau) = \Retr_x(\tau v)$ is well-defined for any sufficiently small $\tau$ and parametrizes a smooth manifold curve that satisfies $\sigma_{x,v}(0) = x$, $\dot\sigma_{x,v}(0) = v$. If the manifold is endowed with a Riemannian structure and it holds that $\ddot\sigma_{x,v}(0) = 0$ for any $x$ and $v$, then the retraction is said to be of second-order. 
	
\subsection{DLRA}\label{s:backgroundDLRA}

To explain the basic idea of DLRA, let us consider an initial value problem governed by a smooth vector field $F:\Rmn\to\Rmn$:
\begin{equation}\label{eq:ambientEquation}
	\begin{cases*}
		A^\prime(t) = F(A(t)), \quad t\in\pac{0,T},\\
		A(0) = A_0\in\Rmn.
	\end{cases*}
\end{equation}  
DLRA aims at finding an approximation of the so-called \emph{ambient solution} $A(t)\in \Rmn$ on the manifold of rank-$r$ matrices, for fixed $r\ll \min\paa{m,n}$, in order to improve computational efficiency while maintaining satisfactory accuracy. Indeed, representing the approximation of $A(t)$ in factored form drastically reduces storage complexity.
The central challenge of DLRA consists in computing efficiently a factored low-rank approximation of the ambient solution without having to first estimate the ambient solution and then to truncate it to an accurate rank-$r$ approximation. In the following, the rank is fixed a priori and does not change throughout the integration interval. Then, the DLRA problem under consideration associated to the ambient equation~\eqref{eq:ambientEquation} can be formalized as follows.
\begin{problem}\label{problem:projectedDynamicsProblem}
	Given a smooth vector field $F:\Rmn \to\Rmn$, an initial matrix ${A_0\in\Rmn}$ and a target rank $r$ such that $\sigma_r(A_0)>\sigma_{r+1}(A_0)$, the DLRA problem consists in determining $t\mapsto Y(t)\in\mcal_r$ solving the following initial value problem 
	\begin{equation}\label{eq:odeDLRA}
		\begin{cases}
			\revision{\dot Y(t) = \Pi(Y(t))F(Y(t))},\quad t \in\pac{0,T},\\
			Y(0) = Y_0,
		\end{cases}
	\end{equation}
	where $Y_0 =\Pi_{\mcal_r}\pa{A_0}\in\mcal_r$, the rank-$r$ truncated singular value decomposition of $A_0$.
\end{problem}
The origins of the above problem are rooted in the Dirac--Frenkel variational principle, by which the dynamics of~\eqref{eq:ambientEquation} are optimally projected onto the tangent space of the manifold\revision{. In fact, for any $Y\in\mcal_r$, the orthogonal projection of $F(Y)$ to $T_Y\mcal_r$ returns the closest vector in the tangent space:}
\begin{equation}\label{eq:modelingError}				
	\norm{F(Y) - 	\Pi(Y)F(Y)}_{\mathrm{F}} = \minOn{V\in \tspace{Y}{\mcal_r}} \norm{F(Y) - V}_{\mathrm{F}}.
\end{equation}	
This optimality criterion together with the optimal choice of $Y_0$ are the local first-order approximation of the computationally demanding optimality $Y_{\opt}(t) = \Pi_{\mcal_r}\pa{A(t)}$. The gap~\eqref{eq:modelingError} between the original dynamics and the projected dynamics of Problem~\ref{problem:projectedDynamicsProblem} is known as the \emph{modeling error}~\cite{kieriVandereycken}. 
Given appropriate smoothness requirements on $F$ and assuming the modeling error can be uniformly bounded in a neighborhood \revision{$\ucal\subset \matr{m}{n}$ of the trajectory $Y([0,T])\subset\mcal$} of the exact solution of~\eqref{eq:odeDLRA} as
\begin{equation}
	\maxOn{Y\in\,\ucal\cap\mcal_r}\norm{F(Y) - \Pi(Y)F(Y)}_{\mathrm{F}} \leq \varepsilon,
\end{equation}
then it can be shown, see e.g.~\cite[Theorem 2]{kieriVandereycken}, that there exists a constant $C>0$ depending on the final time $T$ such that 
\begin{equation}
	\norm{A(T) - Y(T)}_{\mathrm{F}} \leq C \pa{\delta_0 + \varepsilon},
\end{equation}
\revision{where $\delta_0 = \norm{A(0) - Y(0)}_{\mathrm{F}}$ denotes the initial approximation error.}

In recent years, several computationally efficient numerical integration schemes to approximate the solution to Problem~\ref{problem:projectedDynamicsProblem} have been proposed~\cite{lubichOseledets,unconventionalIntegrator,kieriVandereycken}. Their output is a time discretization of the solution, where $Y_k\in\mcal_r$ approximates  $Y(k\Deltat)$, for every ${k = 0,\dots,N}$, assuming -- for simplicity -- that a fixed step size ${\Deltat = T / N}$ is used. Existing error analysis results~\cite{kieriLubichWallach,kieriVandereycken} state that, provided the step size is chosen small enough, the error at the final time can be bounded as
\begin{equation}
	\norm{Y_N - A(T)}_{\mathrm{F}} \leq \tilde C \pa{\delta_0 + \varepsilon + \Deltat^q}, 
	\label{eq:errorConvergenceForDLRA}
\end{equation} 
for some integer $q\geq 1$ and a constant $\tilde C > 0$ 
that depends on the final time and the problem at hand. The constant $q$ is called the \emph{convergence order} of the time stepping method.

The most direct strategy to numerically integrate Problem~\ref{problem:projectedDynamicsProblem} is \revision{to} write $Y$ in rank-$r$ factorization and derive individual evolution equations for the factors~\cite{kochLubich}. However, the computational advantages of such an approach are undermined by the high stiffness of the resulting equations when the $r$th singular value of the approximation becomes small, which is often the case in applications. In turn, this enforces unreasonably small step sizes in order to guarantee the stability of explicit integrators. The projector-splitting schemes proposed by Lubich and Oseledets~\cite{lubichOseledets} were the first to remedy this issue. Since then, a collection of methods have been designed that achieve stability independently of the  presence of small singular values~\cite{kieriLubichWallach,unconventionalIntegrator,ceruti2023parallel,lermPerturbativeRetr,kazashiNobileVidlickova}. As we show in the following section, some of these DLRA algorithms are directly related to retractions.

\section{DLRA algorithms and low-rank retractions}\label{s:retractionsAndDLRA}		
The concept of retraction was initially formulated in the context of numerical time integration on manifolds. In the work of Shub~\cite{shub86}, what is now called a retraction was proposed as a generic tool to develop extensions of the forward Euler method that incorporate manifold constraints. Indeed, any retraction admits the following first-order approximation property. Let $t\mapsto Y(t)$ denote the exact solution to a manifold-constrained ODE such as~\eqref{eq:odeDLRA} but on a general embedded manifold $\mcal$. Provided the vector field governing the ODE is sufficiently smooth, the defining properties of a retraction imply that~\cite[Theorem 1]{shub86} \begin{equation}\label{eq:localTruncationErrorRetraction}
	\normSmall{Y(t+\tau)- \Retr_{Y(t)}(\tau \dot Y(t))} = O(\tau^2).
\end{equation}
This mirrors the local truncation error achieved by the forward Euler method on a Euclidean space while ensuring that the curve $\tau \mapsto \Retr_{Y(t)}(\tau \dot Y(t))$ remains on the constraint manifold. Hence, if $Y_{k}$ indicates an approximation to the exact solution $Y(t_k)$  at time $t = t_k$, the update rule	\begin{equation}\label{eq:prototypeIntegrationScheme}
	Y_{k+1} = \Retr_{Y_k}\pa{\Deltat \dot Y_{k}}
\end{equation}	
for approximating the solution at $t = t_{k+1} = t_k + \Deltat$, for a given step size $\Deltat>0$,
is a retraction-based generalization of the forward Euler for manifold-constrained ODEs.
In virtue of~\eqref{eq:localTruncationErrorRetraction}, for any retraction on $\mcal$, the scheme~\eqref{eq:prototypeIntegrationScheme} achieves local truncation error of order two  \revision{(see Section~\ref{s:newRetractionBasedAlgorithms} below for the definition of the order), which implies global first-order convergence under certain conditions~\cite[Section II.3]{hairerSolveODEs1}.} 

Let us now specialize \revision{the discussion on the relation between retractions and numerical integration of manifold-constrained ODEs} to the DLRA differential equation of Problem~\ref{problem:projectedDynamicsProblem} evolving on the fixed-rank manifold $\mcal_r$. Implementing the general scheme~\eqref{eq:prototypeIntegrationScheme} requires the choice of a particular low-rank retraction.	A substantial number of retractions are known for the fixed-rank manifold~\cite{absilOseledets} and, as discussed in the following, several existing DLRA integration schemes are realized as~\eqref{eq:prototypeIntegrationScheme} for a \revision{particular} choice of retraction on $\mcal_r$. 

\subsection{Metric-projection retraction} 
 A large class of methods to numerically integrate manifold-constrained ODEs fit into the class of \emph{projection methods}; see~\cite[\S IV.4]{hairerYellowBook}. Each integration step is carried out in the embedding space with a Euclidean time-stepping method and is followed by the metric projection onto the constraint manifold. For the DLRA differential equation of Problem~\ref{problem:projectedDynamicsProblem}, projection methods have been studied in~\cite{kieriVandereycken}. One step of the \emph{projected forward Euler method} takes the form
 \begin{equation}\label{eq:projectedForwardEuler}
 	Y_{k+1} = \Pi_{\mcal_r}(Y_{k} + \Deltat \Pi(Y_k) F(Y_k)),
 \end{equation} 
 where $\Pi_{\mcal_r}$ coincides with the rank-$r$ truncated SVD. 
 This integration scheme fits into the general class of retraction-based schemes~\eqref{eq:prototypeIntegrationScheme}. Indeed, given $X\in\mcal_r$ and $Z\in T_X\mcal_r$, the metric projection of the ambient space point $X+Z$ defines a retraction~\cite[\S 3]{projLikeRetractions},
 \begin{equation}\label{eq:retractionSVD}
 	\Retr_X^{\mathrm{SVD}}(Z):= \PiOnMrOf{X+Z},
 \end{equation}
which we refer to as the SVD retraction.
Since $Z\in T_X\mcal_r$, the matrix $X + Z$ is of rank at most $2r$. This allows for an efficient implementation of this retraction, as detailed in Algorithm~\ref{alg:retractionSVD}. 

The projected forward Euler method has order $q=1$~\cite[Theorem 4]{kieriVandereycken}. To achieve higher orders of convergence, \emph{projected Runge--Kutta} (PRK) methods have been proposed~\cite[\S5]{kieriVandereycken}. The intermediate stages of such methods are obtained by replacing the forward Euler update in~\eqref{eq:projectedForwardEuler} with the updates of an explicit Runge--Kutta \revision{method}. Although the update vectors may not belong to a single tangent space, the PRK recursion can still be written because the SVD retraction has the particular property that it remains well-defined not only for tangent vectors but also for sufficiently small general vectors $Z\in\Rmn$ (a property that makes it an \emph{extended retraction}~\cite[\S 2.3]{absilOseledets}).
We shall abbreviate the PRK method with $s\geq 1$ stages as PRK$s$. Note that PRK1 coincides with the projected forward Euler method above.
	
	\begin{algorithm}
		\caption{SVD retraction on $\mcal_r$.}\label{alg:retractionSVD}		
		\textbf{Input:} $X = U_0\Sigma_0 V_0^\top\in\mcal_r$, $Z =  UMV^\top + U_{\mathrm{p}}V^\top + UV_{\mathrm{p}}^\top\in T_X\mcal_r$.
		\begin{algorithmic}[1]
			\State $Q_U R_U = U_{\mathrm{p}}$ with $Q_U$ orthonormal;
			\State $Q_V R_V = V_{\mathrm{p}}$ with $Q_V$ orthonormal;
			\State $[U_{\mathrm{S}},\Sigma_{\mathrm{S}},V_{\mathrm{S}}] = \mathrm{SVD}\pa{\begin{bmatrix}
					\Sigma + M &R_V^\top\\
					R_U        &0
			\end{bmatrix}};$
			\State $\Sigma_1 = \Sigma_{\mathrm{S}}(1:r,1:r)$;
			\State $U_1 = \begin{bmatrix}	U_0 &Q_U \end{bmatrix}U_{\mathrm{S}}(:,1:r)$;
			\State $V_1 = \begin{bmatrix}	V_0 &Q_V \end{bmatrix}V_{\mathrm{S}}(:,1:r)$;\\
			\Return : $U_1 \Sigma_1 V_1^\top \revision{=:} \Retr_X^\mathrm{SVD}(\revision{Z})$;
		\end{algorithmic}
	\end{algorithm}
 
\subsection{Projector-splitting KSL retraction}
 
 The expression~\eqref{eq:tangentSpaceProjectionFixedRank} for the orthogonal projection of $Z\in\Rmn$ onto the tangent space at $Y = U\Sigma V^\top\in\mcal_x$ can be expressed as
 \begin{equation}\label{eq:alternativeTangentSpaceProjection}
 	\Pi(Y)Z = ZVV^\top - UU^\top ZVV^\top + UU^\top Z.  
 \end{equation}
 The \emph{projector-splitting} scheme for DLRA introduced in~\cite{lubichOseledets} is derived by applying this decomposition to the right-hand side of~\eqref{eq:odeDLRA} and using standard Lie--Trotter splitting. Each integration step comprises three integration substeps identified with the letters K, S and L, corresponding respectively to the three terms in~\eqref{eq:alternativeTangentSpaceProjection}. When each of the integration substeps is performed with a forward Euler update, we refer to this scheme as the KSL scheme. The scheme is proved to be first-order accurate independently of the presence of small singular values~\cite[Theorem 2.1]{kieriLubichWallach}. 
 
 As shown in~\cite[Theorem 3.3]{absilOseledets}, one step of the KSL scheme actually defines a second-order retraction for the fixed-rank manifold. In fact, it coincides \revision{modulo third-order terms} with the orthographic retraction presented in  Section~\ref{ss:klsRetractionAndTheOrthographicRetraction}. The KSL retraction is denoted by $\Retr^{\mathrm{KSL}}$ and its computation is summarized in Algorithm~\ref{alg:retractionKSL}. Hence, the KSL integration scheme fits into the general scheme~\eqref{eq:prototypeIntegrationScheme} for  Problem~\ref{problem:projectedDynamicsProblem} as it can simply be written as 
	\begin{equation}
		Y_{k+1} = \Retr_{Y_k}^{\mathrm{KSL}}\pa{\Deltat \Pi(Y_k) F(Y_k)}.
	\end{equation}

	\begin{algorithm}[ht]
		\caption{KSL retraction}\label{alg:retractionKSL}		
		\textbf{Input:} $X = U_0\Sigma_0 V_0^\top \in\mcal_r$, $Z =  UMV^\top + U_{\mathrm{p}}V^\top + UV_{\mathrm{p}}^\top\in T_X\mcal_r$.
		\begin{algorithmic}[1]
			\State (K-step) $U_1\hat \Sigma_1 = U_0(\Sigma_0 + M) + U_{\mathrm{p}}$ with $U_1$ orthonormal;
			\State (S-step) $\tilde \Sigma_0 = \hat \Sigma_1 - (U_1^\top  U_{\mathrm{p}} + (U_1^\top  U_0)M)$;
			\State (L-step) $V_1 \Sigma_1^\top  = V_0\tilde\Sigma_0^\top  + Z^\top  U_1$ with $V_1$ orthonormal; 
			\State Optional: $[\tilde U_1, \tilde\Sigma_1, \tilde V_1] = \mathrm{SVD}(\Sigma_1)$;
			\State Optional: $U_1\leftarrow U_1 \tilde U_1$, $\Sigma_1\leftarrow \tilde \Sigma_1$, $V_1\leftarrow V_1 \tilde V_1$;\\
			\Return : $U_1 \Sigma_1 V_1^\top  \revision{=:} \Retr_X^\mathrm{KSL}(\revision{Z})$;
		\end{algorithmic}
	\end{algorithm}

\subsection{Projector-splitting KLS retraction}
	Recently, a modification to KSL projector-splitting method~\cite{unconventionalIntegrator} was proposed to improve its (parallel) performance, while maintaining the stability and accuracy properties of the KSL method. The scheme is known as the \emph{unconventional integrator} and it is a modification of the KSL scheme where the L-step is performed before the S-step. Hence,  when each of the integration substeps is performed with a forward Euler update, we refer to it as the KLS scheme. The KLS scheme comes with the computational advantage of allowing the K-step and L-step to be performed in parallel without compromising first-order accuracy and stability with respect to small singular values~\cite[Theorem 4]{unconventionalIntegrator}. 
	
	As we prove in the next section, one step of the KLS scheme also defines a retraction for the fixed-rank manifold. We denote it by $\Retr^\mathrm{KLS}$ and its computation is detailed in Algorithm~\ref{alg:retractionKLS}. 
	Then, as for other \revision{schemes} so far, the KLS integration scheme for DLRA takes the simple form 
	\begin{equation}
		\revision{Y_{k+1}} = \Retr_{Y_k}^{\mathrm{KLS}}\pa{\Deltat \Pi(Y_k) F(Y_k)}.
	\end{equation}

	\begin{algorithm}
		\caption{KLS retraction}\label{alg:retractionKLS}		
		\textbf{Input:} $X = U_0\Sigma_0 V_0^\top \in\mcal_r$, $Z =  UMV^\top + U_{\mathrm{p}}V^\top + UV_{\mathrm{p}}^\top\in T_X\mcal_r$.
		\begin{algorithmic}[1]
			\State (K-step) $U_1 S_U = U_0(\Sigma_0 + M) + U_{\mathrm{p}}$ with $U_1$ orthonormal;
			\State (L-step) $V_1 S_V = V_0(\Sigma_0^\top + M^\top ) + V_{\mathrm{p}}$ with $V_1$ orthonormal;
			\State $L = U_1^\top  U_0$;
			\State $R = V_1^\top  V_0$;
			\State (S-step) $\Sigma_1 = L\pac{\pa{\Sigma_0 + M} R^\top  + V_{\mathrm{p}}^\top  V_1} + U_1^\top  U_{\mathrm{p}} R^\top $; \Comment{equivalent to $U_1^\top  (X + Z) V_1$}			
			\State Optional: $[\tilde U_1, \tilde\Sigma_1, \tilde V_1] = \mathrm{SVD}(\Sigma_1)$;
			\State Optional: $U_1\leftarrow U_1 \tilde U_1$, $\Sigma_1\leftarrow \tilde \Sigma_1$, $V_1\leftarrow V_1 \tilde V_1$;\\
			\Return : $U_1 \Sigma_1 V_1^\top  \revision{=:} \Retr_X^\mathrm{KLS}(Z)$;
		\end{algorithmic}
	\end{algorithm}
	
	\subsubsection{The KLS retraction and the orthographic retraction}\label{ss:klsRetractionAndTheOrthographicRetraction}
	
	The goal of this section is to prove that the update rule of the KLS scheme, as detailed by Algorithm~\ref{alg:retractionKLS}, defines a second-order retraction. The strategy to reach this goal is based on the observation that the KLS update is obtained as a small perturbation of another commonly encountered retraction, known as the orthographic retraction.

	\begin{algorithm}
		\caption{Orthographic retraction}\label{alg:retractionORTHAsKLS}		
		\textbf{Input:} $X = U_0\Sigma_0 V_0^\top \in\mcal_r$, $Z =  UMV^\top + U_{\mathrm{p}}V^\top + UV_{\mathrm{p}}^\top\in T_X\mcal_r$.
		\begin{algorithmic}[1]
			\State $ U_1S_U = U_0(\Sigma_0 + M) + U_{\mathrm{p}}$ with $ U_1$ orthonormal;
			\State $ V_1S_V = V_0(\Sigma_0^\top + M^\top) + V_{\mathrm{p}}$ with $ V_1$ orthonormal;
			\State $\Sigma_1 = S_U (\Sigma_0 + M)^{-1} S_V^\top$,
			\State Optional: $[\tilde U_1, \tilde\Sigma_1, \tilde V_1] = \mathrm{SVD}(\Sigma_1)$;
			\State Optional: $U_1\leftarrow U_1 \tilde U_1$, $\Sigma_1\leftarrow \tilde \Sigma_1$, $V_1\leftarrow V_1 \tilde V_1$;\\
			\Return : $U_1 \Sigma_1 V_1^\top  \revision{=:} \Retr_X^\mathrm{ORTH}(Z)$;
		\end{algorithmic}
	\end{algorithm}
	
	\begin{figure}
		\centering
		\includegraphics[width=0.75\linewidth]{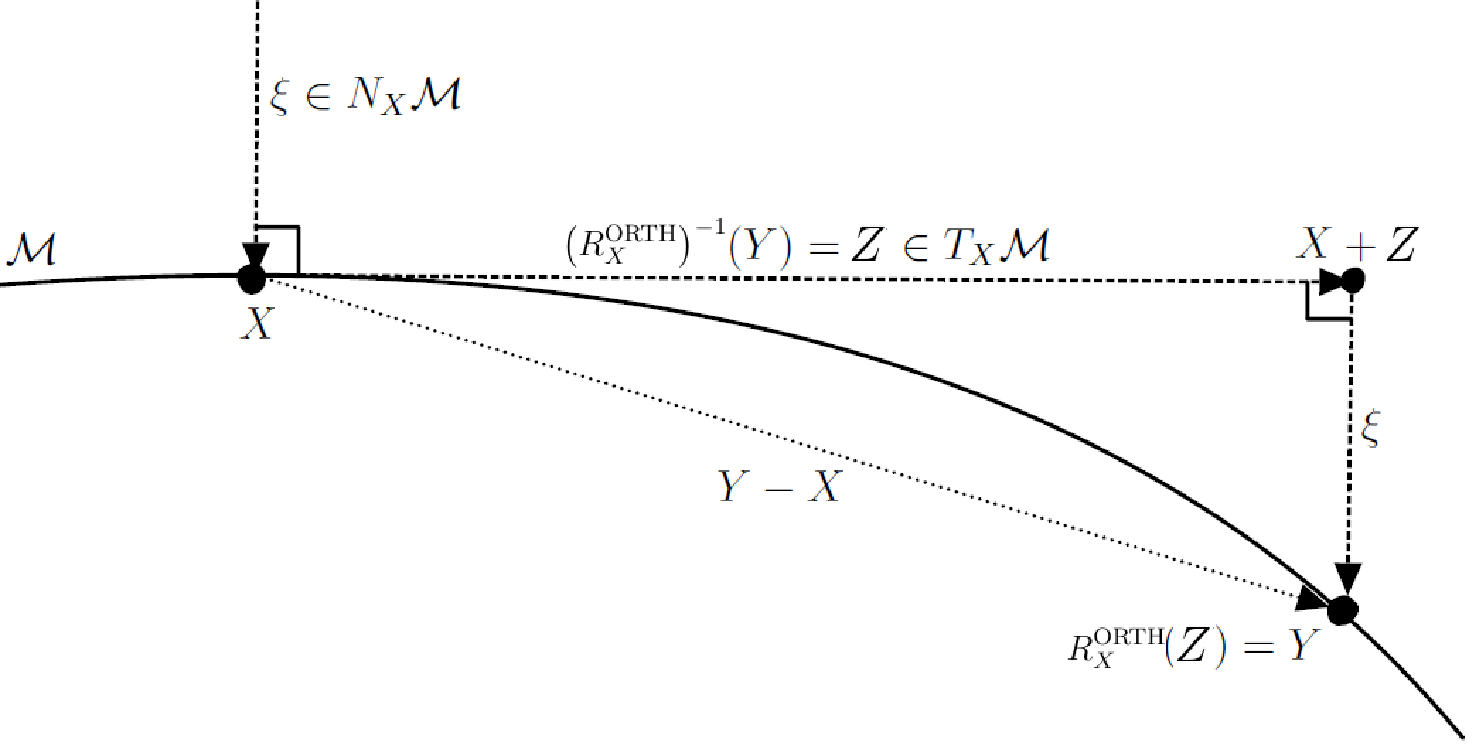}
		\caption{Orthographic retraction and its inverse.}
		\label{fig:orthographicRetr}
	\end{figure}

	\paragraph{The orthographic retraction.} 
	Specialized to the fixed-rank matrix manifold $\mcal_r$, the orthographic retraction consists in perturbing the point $X\in\mcal_r$ in the ambient space as $X+Z\in\matr{m}{n}$ and projecting back onto the manifold but only along vectors from the normal space of the starting point, see Figure~\ref{fig:orthographicRetr}. Formally this reads:
	\begin{equation}
		\Retr_X^{\mathrm{ORTH}}(Z) = \underset{Y\in(X+Z+N_X\mcal_r)\cap\mcal_r}{\arg\min}\norm{X + Z - Y}_{\mathrm{F}}.
	\end{equation}
	A closed-form expression for the solution of this optimization problem for the case of $\mcal_r$ is established in \cite[\S 3.2]{absilOseledets} and is the output of Algorithm~\ref{alg:retractionORTHAsKLS}. In virtue of the analysis carried out in~\cite{projLikeRetractions}, the orthographic retraction is a second-order retraction. A remarkable property of the orthographic retraction is that its local inverse can be computed easily. As suggested by the construction  illustrated in Figure~\ref{fig:orthographicRetr}, the inverse orthographic retraction is obtained by projecting the ambient space difference onto the tangent space, i.e.,
	\begin{equation}
		\pa{\Retr_X^{\mathrm{ORTH}}}^{-1}(Y) = \Pi(X)(Y-X).
	\end{equation}
	This opens the possibility to use the inverse orthographic retraction in practical numerical procedures such as Hermite interpolation of a manifold curve~\cite{mySecondPaper} used in the definition of the \revision{Projected Ralston--Hermite} scheme in Section~\ref{ss:rh}.

	A careful inspection of the computations for the orthographic retraction reported in Algorithm~\ref{alg:retractionORTHAsKLS} reveals that it is identical to the update rule of the KLS scheme given in Algorithm~\ref{alg:retractionKLS}, up to the computation of $\Sigma_1$. Let  $\Sigma_1^{\mathrm{KLS}}$ and $\Sigma_1^{\mathrm{ORTH}}$ denote the quantities computed respectively at step 5 of Algorithm~\ref{alg:retractionKLS} and step 3 of Algorithm~\ref{alg:retractionORTHAsKLS}. Note that $\Sigma_{1}^{\mathrm{ORTH}}$ can be rewritten without explicitly computing the factors $S_U$ and $S_V$. Using the relations 
	\begin{equation}
		S_U = L(\Sigma_0 + M) + U_1^\top U_{\mathrm{p}},\quad S_V = R(\Sigma_0^\top + M^\top) + V_1^\top  V_{\mathrm{p}},
	\end{equation}
	with $L = U_1^\top  U_0$, $R = V_1^\top  V_0$ we evince that
	\begin{align}
		\Sigma_1^{\mathrm{ORTH}} &= S_U (\Sigma_0 + M)^{-1} S_V^\top \\
		&= L \pac{(\Sigma_0 + M) R^\top  + V_{\mathrm{p}}^\top   V_1} +  U_1^\top  U_{\mathrm{p}} R^\top   +  U_1^\top  U_{\mathrm{p}} (\Sigma_0 + M)^{-1}V_{\mathrm{p}}^\top   V_1.
	\end{align}
	 This highlights that the quantity $\Sigma_1$ computed in the KLS scheme and the orthographic retraction differ by
	\begin{equation}
		\Sigma_1^{\mathrm{ORTH}} - \Sigma_1^{\mathrm{KLS}} =  U_1^\top  U_{\mathrm{p}} (\Sigma_0 + M)^{-1}V_{\mathrm{p}}^\top   V_1.
	\end{equation}

	Using this observation together with the second-order property of the orthographic retraction allows us to show that the KLS procedure defines a second-order retraction. This link with the orthographic retraction mirrors the same observation made for the closely related KSL scheme~\cite[Theorem 3.3]{absilOseledets}. 
	\begin{proposition}
		The procedure of Algorithm~\ref{alg:retractionKLS} defines a second-order retraction (called the KLS retraction).
	\end{proposition}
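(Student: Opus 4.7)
The plan is to leverage the identity
\[
\Sigma_1^{\mathrm{ORTH}} - \Sigma_1^{\mathrm{KLS}} = U_1^\top U_p (\Sigma_0 + M)^{-1} V_p^\top V_1
\]
derived just above, together with the fact that the orthographic retraction is already known to be a second-order retraction on $\mcal_r$. The strategy is to show that $\Retr^{\mathrm{KLS}}$ agrees with $\Retr^{\mathrm{ORTH}}$ to sufficiently high order along every curve $\tau \mapsto \tau Z$ emanating from $X$; the retraction and second-order properties then transfer from $\Retr^{\mathrm{ORTH}}$ to the KLS map.

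First I would verify that $\Retr^{\mathrm{KLS}}_X$ is well defined, smooth, and valued in $\mcal_r$ in a neighborhood of $0\in\tspace{X}{\mcal_r}$. The compact QR factorizations used in the K- and L-steps of Algorithm~\ref{alg:retractionKLS} depend smoothly on their inputs as long as these have full column rank, which holds near $Z=0$ because $U_0\Sigma_0$ and $V_0\Sigma_0^\top$ have rank $r$; all remaining operations are rational in the factors and therefore smooth. Substituting $Z=0$ in Algorithm~\ref{alg:retractionKLS} yields $U_1 = U_0$, $V_1 = V_0$, $\Sigma_1 = \Sigma_0$, so $\Retr^{\mathrm{KLS}}_X(0) = X$, and $\Sigma_1$ remains invertible by continuity, guaranteeing that the output lies in $\mcal_r$.

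Next I would track the order of the discrepancy between the two maps. Applying the scaling $(M,U_p,V_p)\mapsto(\tau M,\tau U_p,\tau V_p)$ to the displayed identity gives
\[
\Sigma_1^{\mathrm{ORTH}}(\tau Z) - \Sigma_1^{\mathrm{KLS}}(\tau Z) = \tau^2\, U_1(\tau)^\top U_p\, (\Sigma_0 + \tau M)^{-1}\, V_p^\top V_1(\tau),
\]
where $U_1(\tau)$ and $V_1(\tau)$ denote the orthonormal factors produced by the K- and L-step applied to $\tau Z$. Because $U_1(0) = U_0$ and the gauge condition $U_0^\top U_p = 0$ holds by construction of the tangent parametrization, smoothness of the thin QR factorization yields $U_1(\tau)^\top U_p = O(\tau)$; the symmetric argument gives $V_p^\top V_1(\tau) = O(\tau)$. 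Sandwiching between the bounded factors $U_1(\tau)$ and $V_1(\tau)^\top$ preserves the order, so
\[
\Retr^{\mathrm{KLS}}_X(\tau Z) - \Retr^{\mathrm{ORTH}}_X(\tau Z) = O(\tau^4).
\]

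Finally I would match Taylor expansions at $\tau=0$. Agreement to fourth order implies that the curves $\tau\mapsto \Retr^{\mathrm{KLS}}_X(\tau Z)$ and $\tau\mapsto \Retr^{\mathrm{ORTH}}_X(\tau Z)$ share value, velocity and extrinsic acceleration at $\tau=0$. Since $\Retr^{\mathrm{ORTH}}$ is a second-order retraction, these are respectively $X$, $Z$ and a vector in $N_X\mcal_r$, and the same equalities then hold for the KLS curve; in particular its intrinsic acceleration at $\tau = 0$ vanishes. This yields both the retraction axioms and the second-order property for $\Retr^{\mathrm{KLS}}$. The only step requiring genuine care is the verification that $U_1(\tau)^\top U_p$ and $V_p^\top V_1(\tau)$ are truly of order $\tau$, which rests on the gauge conditions satisfied by $U_p$ and $V_p$ together with the smoothness of the thin QR near a full-column-rank matrix.
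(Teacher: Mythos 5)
Your proof is correct and follows essentially the same route as the paper: verify well-definedness and smoothness of the KLS map near the origin of $T_X\mcal_r$, use the shared factors $U_1(\tau),V_1(\tau)$ to reduce the comparison with the orthographic retraction to the $\Sigma_1$ discrepancy, show that discrepancy is of higher order (you obtain $O(\tau^4)$ from the two $O(\tau)$ factors, the paper shows $o(\tau^3)$ via $C(0)=C'(0)=0$), and then transfer the second-order property from $\Retr^{\mathrm{ORTH}}$. The only cosmetic difference is the final step, where the paper invokes the characterization $\Retr_X(\tau Z)=\Exp_X(\tau Z)+O(\tau^3)$ of second-order retractions while you match the curves' value, velocity and acceleration at $\tau=0$ directly; both are valid.
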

	\begin{proof}
	The proof relies on a necessary and sufficient condition for a retraction to be a second-order retraction stated in~\revision{\cite[Proposition 3]{projLikeRetractions}}. On a general Riemannian manifold $\mcal$, a mapping $\Retr:T\mcal\to \mcal$ is a second-order retraction if and only if for all $(x,v)\in T\mcal$ the curve $t\mapsto \Retr_x(tv)$ is well-defined for all sufficiently small $t$ and satisfies
	\begin{equation}
		\Retr_x(tv) = \Exp_x(tv) + O(t^3),
	\end{equation}
 	where $t\mapsto \Exp_x(tv)$ is a parametrization of the geodesic passing through $x$ with velocity $v$ obtained with the Riemannian exponential map at $x$~\cite[Definition 10.16]{boumalBook}. 
	
		Given $X\in\mcal_r$ and $Z\in T_X\mcal_r$, the orthonormalizations of the first two lines of the KLS scheme in Algorithm~\ref{alg:retractionKLS} are uniquely defined provided the matrices to orthonormalize have full rank. This is the case for any $Z$ sufficiently small by lower semi-continuity of the matrix rank. By smoothness of the orthonormalization process, we know that the matrix $\Sigma_1$ depends smoothly on $Z$. Since for $Z = 0$ we have  $\Sigma_1 = \Sigma_0$, assumed to be full rank, the matrix $\Sigma_1$ has full rank for sufficiently small $Z$. Hence $U_1\Sigma_1V_1^\top  = \Retr_X^\mathrm{KLS}(Z)$ is uniquely and smoothly defined for any $Z$ in a neighborhood of the origin of $T_X\mcal_r$ and belongs to $\mcal_r$. 
		
		Now, consider the curves $t\mapsto \Retr_X^{\text{ORTH}}(tZ)$ and $t\mapsto \Retr_X^{\text{KLS}}(tZ)$, well-defined for sufficiently small $t$. Since these curves share the left and right singular vectors $U_1(t)$ and $V_1(t)$, their difference is given by
		\begin{equation*}
			{\Retr_X^{\text{ORTH}}(tZ) - \Retr_X^{\text{KLS}}(tZ)} = t^2  U_1(t)^\top  U_{\mathrm{p}}(\Sigma_0 + tM)^{-1}V_{\mathrm{p}}^\top   V_1(t)= t^2 C(t),
		\end{equation*}
		where $C(t):= U_1(t)^\top  U_{\mathrm{p}}(\Sigma_0 + tM)^{-1}V_{\mathrm{p}}^\top   V_1(t)$. Let us show that $C(t) = o(t)$, i.e. ${\lim_{t\to 0} C(t) / t = 0}$. Since $U_1(0) = U_0$ and $V_1(0) = V_0$, by definition of tangent space of $\mcal_r$, we know that $U_1(0)^\top  U_{\mathrm{p}} = 0$ and  $V_{\mathrm{p}}^\top  V_1(0) = 0$. Hence $C(0) = 0$ and therefore 
		\begin{equation}
			\lim_{t\to 0}\frac{C(t)}{t} = \lim_{t\to 0}\frac{C(t) - C(0)}{t}.
		\end{equation}
		This coincides with $C'(0)$ since $C$ is smooth for small $t$. But since
		\begin{align}
			C'(0) &= U_1'(0)^\top  U_{\mathrm{p}}\Sigma_0^{-1}\underbraceWith{=0}{V_{\mathrm{p}}^\top   V_1(0)} +\underbraceWith{=0}{ U_1(0)^\top  U_{\mathrm{p}}}\ddt{}{t}(\Sigma_0 + tM)^{-1}\rest{t=0}\underbraceWith{=0}{V_{\mathrm{p}}^\top   V_1(0)}\\ 
			&\quad + \underbraceWith{=0}{ U_1(0)^\top  U_{\mathrm{p}}}\Sigma_0^{-1}V_{\mathrm{p}}^\top   V_1'(0) = 0
		\end{align}
		we can infer that $C(t) = o(t)$ and therefore that $\Retr_X^{\text{KLS}}(tZ) = \Retr_X^{\text{ORTH}}(tZ) + o(t^3)$.
		Using the result~\cite[Proposition 2.3]{projLikeRetractions} explained in the beginning of the proof and the second-order property of the orthographic retraction, we obtain that the retraction curve $t\mapsto \Retr_X^{\text{ORTH}}(tZ)$ approximates the geodesic $t\mapsto \Exp_X(tZ)$, as $ \Retr_X^{\text{ORTH}}(tZ) = \Exp_X(tZ) + O(t^3)$. Combining the two results leads to
		\begin{align}
			\Retr_X^{\text{KLS}}(tZ) = \Retr_X^{\text{ORTH}}(tZ) + o(t^3) = \gamma_{X,Z}(t) + O(t^3) + o(t^3) = \gamma_{X,Z}(t) + O(t^3).
		\end{align}
		Using once again the result from~\cite[Proposition 2.3]{projLikeRetractions}, this implies that $\Retr_X^{\text{KLS}}$ is indeed a second-order retraction. \hfill $\qed$
	\end{proof}

	\section{New retraction-based time stepping algorithms }\label{s:newRetractionBasedAlgorithms}	

	Having seen in Section~\ref{s:retractionsAndDLRA} that existing DLRA time integration algorithms can be directly related with particular low-rank retractions, we will now discuss the opposite direction, how low-rank retractions can be used to produce new integration schemes.
	
    Before proceeding to the derivation of the new methods, we first briefly describe the rationale behind these methods for the initial value problem~\eqref{eq:ambientEquation} evolving in $\Rmn$. We let $A(\Deltat)$ denote the exact solution at time $t = \Deltat$ and $\tilde A(\Deltat)$ its approximation obtained by performing one step of a given numerical integration scheme starting from $A(0)$. The scheme is said to have order $q$ if the local truncation error satisfies
	\begin{equation} 
		 \normSmall{A(\Deltat) - \tilde A(\Deltat)} = O(\Deltat^{q+1}).
	\end{equation} 
	In other words, any curve $\tilde A(\cdot )$ on $\Rmn$ that is a sufficiently good approximation of $A( \cdot )$ in the vicinity of $0$ defines a 
	suitable numerical integration scheme.	
In the two methods we propose, the curves $\tilde A$ are constructed through manifold interpolation, based on retractions. 
    
   
    By definition, see Section~\ref{s:background}, any retraction induces manifold curves with prescribed initial position and velocity. If, additionally, the retraction has second order, it is possible to prescribe an initial acceleration.
    \begin{proposition}[{\cite[Exercise 5.46]{boumalBook}, \cite[Corollary 3.2]{myThesis}}]\label{prop:retractionCurveWithAcceleration}
    Let $\Retr$ be a retraction on $\mathcal M$ and consider arbitrary $x\in\mcal$ and $v,a\in T_x\mcal$. Then the curve
    	\begin{equation}\label{eq:retractionCurveWithAcceleration}
    		\sigma(t) = \Retr_x\pabigg{tv + \frac{t^2}{2}a}
    	\end{equation}
    	is well-defined for all $t$ sufficiently small and satisfies
    	\begin{enumerate}
    		\item $\sigma(0) = x$ and $\dot\sigma(0) = v$,
    		\item $\ddot\sigma(0) = a$ if $\Retr$ is a second-order retraction.
    	\end{enumerate} 
    \end{proposition}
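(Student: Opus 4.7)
The plan is to split the proof into two parts following the two claims of the proposition, and to handle the acceleration by passing through the extrinsic second derivative and then projecting onto the tangent space via the Levi-Civita connection formula for Riemannian submanifolds.

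First I would handle well-definedness and the first claim. Setting $\tau(t) := tv + \tfrac{t^2}{2}a\in T_x\mcal$, the curve $\sigma(t) = \Retr_x(\tau(t))$ is well-defined for all sufficiently small $t$ because $\Retr_x$ is smoothly defined on a neighborhood of the origin of $T_x\mcal$ and $\tau(0)=0$. Direct evaluation gives $\sigma(0)=\Retr_x(0)=x$. The chain rule yields
\begin{equation}
\dot\sigma(0) = \Dif{\Retr_x}{0}[\tau'(0)] = \Dif{\Retr_x}{0}[v] = v,
\end{equation}
where the last identity is the second defining property of a retraction.

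For the second claim, the strategy is to compute the extrinsic acceleration $\sigma''(0)$ and then use the identity $\ddot\sigma(0) = \Pi(x)\sigma''(0)$ coming from the fact that for a Riemannian submanifold of $\ecal$ the Levi-Civita connection is obtained by orthogonal projection of the Euclidean one. Applying the product and chain rules in the ambient space to $\sigma(t) = \Retr_x(\tau(t))$, I get
\begin{equation}
\sigma''(0) = \D^2 \Retr_x(0)[v,v] + \Dif{\Retr_x}{0}[\tau''(0)] = \D^2 \Retr_x(0)[v,v] + a.
\end{equation}
Since $a\in T_x\mcal$, projecting onto the tangent space gives
\begin{equation}
\ddot\sigma(0) = \Pi(x)\D^2 \Retr_x(0)[v,v] + a.
\end{equation}

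The key step is to show that the first term vanishes whenever $\Retr$ is a second-order retraction. I would do this by considering the particular retraction curve $\beta(t):=\Retr_x(tv)=\sigma_{x,v}(t)$. Its extrinsic second derivative is $\beta''(0) = \D^2 \Retr_x(0)[v,v]$, and its intrinsic acceleration equals $\ddot\beta(0) = \Pi(x)\beta''(0)$. By the very definition of a second-order retraction, $\ddot\beta(0)=0$, so $\Pi(x)\D^2 \Retr_x(0)[v,v]=0$. Substituting back gives $\ddot\sigma(0)=a$, which closes the argument. The main (mild) obstacle is keeping straight the distinction between the extrinsic derivative $\sigma''$ and the intrinsic one $\ddot\sigma$; once that bookkeeping is settled, the second-order property of $\Retr$ does all the work.
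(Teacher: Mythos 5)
Your argument is correct: the chain-rule computation of the extrinsic second derivative, the Gauss-formula identity $\ddot\sigma(0)=\Pi(x)\sigma''(0)$ valid for the paper's setting of Riemannian submanifolds of $\ecal$, and the observation that the second-order property forces $\Pi(x)\D^2\Retr_x(0)[v,v]=0$ together give exactly the claim. The paper itself states this proposition without proof, citing \cite[Exercise 5.46]{boumalBook} and \cite[Corollary 3.2]{myThesis}, and your proof is essentially the standard argument given in those references, so there is nothing to flag.
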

	
	When setting $x = \gamma(0)$, $v = \dot\gamma(0)$ and $a = \ddot\gamma(0)$ for a smooth manifold curve $\gamma$, it is shown in~\cite[Proposition 3.22]{myThesis} that the Riemannian distance between $\gamma$ and the retraction curve constructed by Proposition~\ref{prop:retractionCurveWithAcceleration} satisfies
	\begin{equation}\label{eq:approximationPowerOfRetractionCurves}
		d(\gamma(t), \sigma(t)) = O(t^3), \quad t \to 0,
	\end{equation}
	if $\Retr$ is a second-order retraction.


	\subsection{An accelerated forward Euler  scheme}\label{ss:afe}

	The first method we propose for DLRA is obtained by accelerating the projected forward Euler scheme~\eqref{eq:projectedForwardEuler} through a second-order correction.
	
	\subsubsection{Euclidean analog}
 	For illustration, let us first derive the accelerated forward Euler method for the initial value problem~\eqref{eq:ambientEquation} evolving in $\Rmn$. If the solution to~\eqref{eq:ambientEquation} is sufficiently smooth, we get from its Taylor expansion that
 	\begin{align*}
 		A(\Deltat) - \pa{A(0) + \Deltat A'(0) + \frac{\Deltat^2}{2}A''(0)} = O(\Deltat^3).
 	\end{align*}
 	Assuming we can compute $A''(0) = \Diff{F}{A_0}{F(A_0)}$, the differential of the vector field along $F(A_0)$, the Euclidean accelerated forward Euler (AFE) scheme is defined by the update
	\begin{equation}
		\label{eq:euclideanAFE}
		A^{\mathrm{AFE}}(\Deltat) = A_0 + \Deltat F(A_0) + \frac{\Deltat^2}{2} \Diff{F}{A_0}{F(A_0)}.
	\end{equation}
		
	The Euclidean AFE scheme is of order $q = 2$ since the local truncation error is by construction of order $O(\Deltat^{3})$. The scheme is conditionally stable with the same stability domain as any 2-stages explicit Runge--Kutta method of order 2~\cite[Theorem 2.2]{hairerWanner2}. That being said, this scheme requires the evaluation of the Jacobian at each step, which renders the AFE scheme non-competitive on Euclidean spaces. In fact, for Euclidean ODEs there are simple second-order methods using only the evaluation of the vector field, such as Runge--Kutta methods. 
	
	In the following, we generalize the AFE scheme for the DLRA differential equation of Problem~\ref{problem:projectedDynamicsProblem} by first showing how to compute the acceleration of the exact solution.  As we will see below,  in Proposition~\ref{prop:dlraAcceleration}, it is the sum of two terms: the tangent component of the ambient acceleration and the acceleration due to the curvature of the manifold. The latter is expressed using the \emph{Weingarten map}, a classical concept of Riemannian geometry that we briefly introduce in the next section.

	\subsubsection{The Weingarten map}\label{sss:weingartenMap}
	As before, we consider a Riemannian submanifold $\mcal$ embedded into a finite-dimensional Euclidean space $\ecal$; see~\cite[\S 8]{leeRiemManifs2} for a more general setting.
	We let $\mathfrak{X}_{\mathrm{T}}(\mcal)$ and $\mathfrak{X}_{\mathrm{N}}(\mcal)$ denote the set of smooth tangent and normal vector fields on $\mcal$, respectively.
	
	The Weingarten map is constructed from the second fundamental form. The latter measures the discrepancy between the ambient Euclidean connection and the Riemannian connection $\nabla$ on the submanifold $\mcal$. For every $W,Z\in\mathfrak{X}_{\mathrm{T}}(\mcal)$ and $x\in\mcal$, we have that $\nabla_WZ(x) = \Pi(x)\D_W Z(x)$, were $\D$ is the Euclidean connection corresponding to standard directional derivative. Hence, the vector 
	\begin{equation}
		\D_WZ(x) - \nabla_WZ(x) = \D_WZ(x) - \Pi(x)\D_WZ(x) 
	\end{equation}
	 belongs to the normal space at $x$ and depends smoothly on $x$. Hence, the function $(I - \Pi)\D _W Z$ is a smooth normal vector field on $\mcal$. 
	\begin{definition}
		The second fundamental form  $\II:\mathfrak{X}_{\mathrm{T}}(\mcal)\times\mathfrak{X}_{\mathrm{T}}(\mcal)\to\mathfrak{X}_{\mathrm{N}}(\mcal)$ is a symmetric bilinear map defined by $\II(W,Z) = (I - \Pi)\D _W Z$.
	\end{definition}  
	\begin{definition}
		The Weingarten map is 
		\begin{equation}
			\map{\wcal}{\mathfrak{X}_{\mathrm{T}}(\mcal)\times\mathfrak{X}_{\mathrm{N}}(\mcal)}{\mathfrak{X}_{\mathrm{T}}(\mcal)}{(W,N)}{\wcal(W,N)}
		\end{equation}
		defined by the multilinear form $\scalp{N}{\II(W,Z)} = \scalp{\wcal(W,N)}{Z}, \quad \forall\, Z\in \mathfrak{X}_{\mathrm{T}}\pa{\mcal}$.
	\end{definition}
	Since $\nabla_WZ$ can be shown to depend only on the value of the vector field $W$ at $x$,  using the properties of the Levi-Civita connection both the second fundamental form and the Weingarten map can be defined pointwise, i.e. depending only on values of the vector fields at $x$~\cite[Proposition 8.1]{leeRiemManifs2}. Given $w,z\in T_x\mcal$ and $n\in N_x\mcal$, the Weingarten map $\wcal_x:T_x\mcal\times N_x\mcal \to T_x\mcal$ is defined pointwise as
	\begin{equation}
		 \scalp{\wcal_x(w,n)}{z}_x = \scalp{\wcal(W,N)}{Z}\rest{x}
	\end{equation}
    for any $W,Z\in \mathfrak{X}_{\mathrm{T}}(\mcal)$ and $N\in\mathfrak{X}_{\mathrm{N}}(\mcal)$ satisfying $W(x) = w$, $Z(x) = z $ and $N(x) = n$. The following characterization relates the Weingarten map at $x$ with the differential of the tangent space projection.  
	\begin{proposition}[{\cite[Theorem 1]{extrinsiclook}}]\label{prop:weingartenAndDifferentialOfProjection}
		For any $x\in\mcal$, $w\in T_x\mcal$ and $n\in N_x\mcal$, the Weingarten map satisfies 
		\begin{equation}
			\wcal_x(w,n) = \Diff{\Pi}{x}{w}n = \Pi(x)\Diff{\Pi}{x}{w} n = \Pi(x)\Diff{\Pi}{x}{w}\Pi^{\perp}(x) q
		\end{equation}
		for any $\revision{q}\in T_x\ecal\simeq\ecal$ such that $\Pi^{\perp}(x) \revision{q} = n$.
	\end{proposition}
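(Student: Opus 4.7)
The plan is to unfold the definition of the Weingarten map, express the second fundamental form in terms of $\D\Pi$, and then exploit two algebraic properties of the projector $\Pi(x)$ to rearrange the resulting inner products.

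First I would extend $w,z$ to local tangent vector fields $W,Z$ on $\mcal$ with $W(x)=w$, $Z(x)=z$, and start from the defining relation $\langle \wcal_x(w,n), z\rangle = \langle n, \II(W,Z)(x)\rangle = \langle n, (\D_W Z)(x)-\nabla_W Z(x)\rangle$. Since $\nabla_W Z(x)\in T_x\mcal$ and $n\in N_x\mcal$, the second term drops out. To replace $\D_W Z$ by something involving $\D\Pi$, I would differentiate the identity $\Pi(x')Z(x')=Z(x')$, valid on $\mcal$ because $Z$ is tangent, along $W$ at $x$. The product rule gives $\Diff{\Pi}{x}{w}\,z + \Pi(x)(\D_W Z)(x) = (\D_W Z)(x)$, hence $(I-\Pi(x))(\D_W Z)(x) = \Diff{\Pi}{x}{w}\,z$, and therefore
\begin{equation}
    \langle \wcal_x(w,n), z\rangle \;=\; \langle n, \Diff{\Pi}{x}{w}\,z\rangle.
\end{equation}

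Next I would use two properties of $\Diff{\Pi}{x}{w}$ obtained by differentiating $\Pi^2=\Pi$ and $\Pi^\top=\Pi$ at $x$. Self-adjointness of $\Pi$ carries over to self-adjointness of $\Diff{\Pi}{x}{w}$ as an endomorphism of $\ecal$, so $\langle n, \Diff{\Pi}{x}{w}\,z\rangle = \langle \Diff{\Pi}{x}{w}\,n, z\rangle$. From $\Pi^2=\Pi$ one gets $\Diff{\Pi}{x}{w}\,\Pi(x) + \Pi(x)\Diff{\Pi}{x}{w} = \Diff{\Pi}{x}{w}$; sandwiching with $\Pi(x)$ on both sides yields $\Pi(x)\Diff{\Pi}{x}{w}\,\Pi(x)=0$ and sandwiching with $\Pi(x)^\perp$ yields $\Pi(x)^\perp\Diff{\Pi}{x}{w}\,\Pi(x)^\perp=0$. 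Consequently $\Diff{\Pi}{x}{w}$ swaps $T_x\mcal$ and $N_x\mcal$; in particular $\Diff{\Pi}{x}{w}\,n\in T_x\mcal$.

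Combining these observations, $\langle \wcal_x(w,n), z\rangle = \langle \Diff{\Pi}{x}{w}\,n, z\rangle$ for every $z\in T_x\mcal$, and both sides lie in $T_x\mcal$, so $\wcal_x(w,n) = \Diff{\Pi}{x}{w}\,n$. Because $\Diff{\Pi}{x}{w}\,n$ is already tangent, inserting $\Pi(x)$ in front is free, giving the middle equality. Finally, if $p\in\ecal$ satisfies $\Pi(x)^\perp p = n$, then $n = \Pi(x)^\perp p$, whence $\Pi(x)\Diff{\Pi}{x}{w}\,\Pi(x)^\perp p = \Pi(x)\Diff{\Pi}{x}{w}\,n = \wcal_x(w,n)$, which is the last equality (with the typographical $q$ read as $p$).

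The one step that needs a bit of care is showing that the definitions are independent of the chosen extensions $W,Z$ (so that the pointwise identification of the second fundamental form with $\Diff{\Pi}{x}{w}\,z$ is legitimate); this is standard and follows from the tensoriality of $\II$ cited just before the proposition in the excerpt. After that, everything reduces to the algebra of the orthogonal projector and its derivative, so I do not anticipate a hard step.
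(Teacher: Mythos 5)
Your argument is correct. Note, however, that the paper itself offers no proof of this proposition: it is quoted verbatim from the cited reference (Theorem 1 of the ``extrinsic look'' paper) and used as a black box in the proof of Proposition~\ref{prop:dlraAcceleration}. Your derivation is the standard one and essentially reproduces the argument of that reference: kill the tangential part of $\D_W Z$ against $n$, differentiate the identity $\Pi Z = Z$ along $W$ to identify $(I-\Pi(x))\D_WZ(x)$ with $\Diff{\Pi}{x}{w}z$, then use self-adjointness of $\Diff{\Pi}{x}{w}$ (from $\Pi^\top=\Pi$) and the block structure forced by differentiating $\Pi^2=\Pi$ (namely $\Pi(x)\Diff{\Pi}{x}{w}\Pi(x)=0$ and $\Pi(x)^\perp\Diff{\Pi}{x}{w}\Pi(x)^\perp=0$) to conclude that $\Diff{\Pi}{x}{w}n$ is tangent and represents $\wcal_x(w,\cdot)$ against all $z\in T_x\mcal$. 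All the individual steps check out, including the observation that the final $q$ in the statement should read $p$, and your remark that tensoriality of $\II$ justifies the pointwise definition is exactly the point the paper handles by citing \cite[Proposition 8.1]{leeRiemManifs2}. So there is no gap; your proof simply makes explicit what the paper delegates to the literature.
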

	
\paragraph{Expression for the fixed-rank manifold.}
 Depending on the conventions used to represent points and tangent vectors, many equivalent expressions are known for the Weingarten map of the fixed-rank manifold, see for example~\cite{extrinsiclook,fepponLermusiaux18}. In our conventions, for any $Y = U\Sigma V^\top\in\mcal_r$, ${T = UMV^\top + U_{\mathrm{p}}V^\top + UV_{\mathrm{p}}^\top\in T_Y\mcal_r}$ and $N\in N_Y\mcal_r$ the Weingarten map can be computed as 
	\begin{equation}
		\wcal_Y(T,N) = NV_{\mathrm{p}} \Sigma^{-1} V^\top  +U \revision{\Sigma^{-1}}U_{\mathrm{p}}^\top N\in T_Y\mcal_r.
	\end{equation}
	This expression nicely highlights two notable features that are known for the fixed-rank manifold: it is a ruled surface with unbounded curvature.  In fact, along the subspace associated to the $UMV^\top$ term, $\mcal_r$ is flat, while the curvature along the other directions grows unbounded as $\sigma_r(Y) \rightarrow 0$.
		
	\subsubsection{The accelerated forward Euler (AFE) integration scheme for DLRA}\label{sss:afeScheme}
	With the definitions introduced in Section~\ref{sss:weingartenMap}, we can compute the acceleration of the DLRA solution curve allowing to generalize the AFE integration scheme to Problem~\ref{problem:projectedDynamicsProblem}. 
	\begin{proposition}\label{prop:dlraAcceleration}
		If a smooth curve on $\mcal_r$ is defined by $\dot Y = \Pi(Y) F(Y)$ for some smooth vector field $F:\Rmn\to\Rmn$, then its intrinsic acceleration can be computed as
		\begin{equation}\label{eq:dlraAcceleration}
			\ddot{Y} = \Pi(Y)\Diff{F}{Y}{\Pi(Y)F(Y)} + \wcal_Y(\Pi(Y) F(Y), \Pi^\perp(Y) F(Y)).
		\end{equation}
	\end{proposition}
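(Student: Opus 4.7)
The plan is to compute the intrinsic acceleration by first relating it to the ambient acceleration via the Gauss formula for a Riemannian submanifold, and then using Proposition~\ref{prop:weingartenAndDifferentialOfProjection} to recognize the curvature contribution as a Weingarten term. Since $\mcal_r$ is a Riemannian submanifold of $\Rmn$, the Levi-Civita connection $\nabla$ is the tangential component of the ambient (Euclidean) directional derivative, so that the intrinsic acceleration is given by $\ddot Y = \nabla_{\dot Y}\dot Y = \Pi(Y) Y''$, where $Y''$ denotes the ambient second derivative.

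From $\dot Y = \Pi(Y)F(Y)$, the product rule gives
\begin{equation}
Y'' = \Diff{\Pi}{Y}{\dot Y}\, F(Y) + \Pi(Y)\,\Diff{F}{Y}{\dot Y}.
\end{equation}
Applying $\Pi(Y)$ and inserting $\dot Y = \Pi(Y)F(Y)$ reduces the statement to proving that the first term projects onto $\wcal_Y(\Pi(Y) F(Y), \Pi(Y)^\perp F(Y))$, i.e.,
\begin{equation}
\Pi(Y)\,\Diff{\Pi}{Y}{\dot Y}\,F(Y) = \wcal_Y\pa{\Pi(Y)F(Y), \Pi(Y)^\perp F(Y)}.
\end{equation}

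To establish this, I would split $F(Y) = \Pi(Y)F(Y) + \Pi(Y)^\perp F(Y)$. For the normal piece, Proposition~\ref{prop:weingartenAndDifferentialOfProjection} applied with $p = F(Y)$ directly yields $\Pi(Y)\Diff{\Pi}{Y}{\dot Y}\Pi(Y)^\perp F(Y) = \wcal_Y(\dot Y, \Pi(Y)^\perp F(Y))$. For the tangential piece, I would use the standard identity obtained by differentiating the projector relation $\Pi(Y)^2 = \Pi(Y)$ in the direction $\dot Y$, namely
\begin{equation}
\Diff{\Pi}{Y}{\dot Y}\Pi(Y) + \Pi(Y)\,\Diff{\Pi}{Y}{\dot Y} = \Diff{\Pi}{Y}{\dot Y}.
\end{equation}
Evaluating on the tangent vector $\Pi(Y)F(Y)$ and observing $\Pi(Y)\cdot\Pi(Y)F(Y) = \Pi(Y)F(Y)$ gives $\Pi(Y)\Diff{\Pi}{Y}{\dot Y}\Pi(Y)F(Y) = 0$, so the tangential part contributes nothing.

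Combining the two pieces produces exactly~\eqref{eq:dlraAcceleration}. The only nontrivial step is the bookkeeping of applying the right form of Proposition~\ref{prop:weingartenAndDifferentialOfProjection} and verifying that the differentiated projector vanishes on the tangent component; both are short identities, so I do not expect a genuine obstacle. The main conceptual point is simply to recognize that the ambient acceleration naturally splits into a tangential Jacobian term and a normal curvature term, the latter of which is, by definition of the Weingarten map, the geometric correction due to the embedding.
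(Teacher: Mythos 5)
Your proposal is correct and follows essentially the same route as the paper: write the intrinsic acceleration as the tangential projection of the ambient derivative of $\dot Y = \Pi(Y)F(Y)$, apply the product rule, and identify the projector-derivative term with the Weingarten map via Proposition~\ref{prop:weingartenAndDifferentialOfProjection}. Your explicit verification that the tangential piece vanishes (by differentiating $\Pi(Y)^2 = \Pi(Y)$) is exactly the fact the paper invokes more tersely, so the two arguments coincide.
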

	\begin{proof}
		By definition of the Levi-Civita connection for Riemannian submanifolds we have
		\begin{equation}
			\ddot Y = \nabla_{\dot{Y}}\dot{Y} = \Pi(Y) \D_{\dot Y}\pa{\Pi(Y)F(Y)}.
		\end{equation}	
		Using the product rule and the fact that $\Pi(Y)^2 = \Pi(Y)$, it follows from  the last equality of Proposition~\ref{prop:weingartenAndDifferentialOfProjection} that
		\begin{align*}
			\Pi(Y) \D_{\dot Y}\pa{\Pi(Y)F(Y)} &= \Pi(Y)\pa{\Pi(Y)\D F(Y)[\dot Y] + \Pi(Y)\D \Pi(Y)[\dot Y] F(Y)}\\
			&= \Pi(Y)\Diff{F}{Y}{\Pi(Y)F(Y)} + \Pi(Y)\Diff{\Pi}{Y}{\Pi(Y)F(Y)}\Pi^\perp(Y) F(Y)\\
			&= \Pi(Y)\Diff{F}{Y}{\Pi(Y)F(Y)} + \wcal_Y(\Pi(Y) F(Y), \Pi^\perp(Y) F(Y)).
			\tag*{$\qed$}
		\end{align*}
\end{proof}
	
	Hence, to mimic the Euclidean AFE update defined by equation~\eqref{eq:euclideanAFE}, we need to construct a smooth manifold curve $Y^\mathrm{AFE}(\Deltat)$ such that
	\begin{equation*}
		Y^{\mathrm{AFE}}(0) = Y_0,\quad \dot Y^{\mathrm{AFE}}(0) = \dot Y(0),\quad \ddot Y^{\mathrm{AFE}}(0) = \ddot Y(0).
	\end{equation*}
	Proposition~\ref{prop:retractionCurveWithAcceleration} shows that we can construct such a curve using a second-order retraction. Let $\Retr^{\II}$ indicate any second-order retraction, then the manifold analogous of the AFE update~\eqref{eq:euclideanAFE} reads as
	\begin{equation}\label{eq:generalAFEUpdate}
		Y^{\mathrm{AFE}}\pa{\Deltat} = \Retr_{Y_0}^{\II}\pa{\Deltat \dot Y_0 +  \Deltat^2 \ddot Y_0 / 2}.
	\end{equation}
	Then, the AFE scheme for DLRA takes the form 
	\begin{equation}\label{eq:dlraAFEscheme}
		\revision{Y_{k+1}} = \Retr_{Y_k}^{\II}\bigg(\Deltat \Pi(Y_k) F(Y_k) +  \frac{\Deltat^2}{2}\ddot Y_{k}  \bigg)
	\end{equation}
	with 
	\begin{equation}\label{eq:accelerationDLRAatStepK}
		\ddot{Y}_k = \Pi(Y_k)\Diff{F}{Y_k}{\Pi(Y_k)F(Y_k)} + \wcal_{Y_k}\pa{ \Pi(Y_k) F(Y_k),  \Pi^\perp(Y_k) F(Y_k)}.
	\end{equation}	
	As a consequence of~\eqref{eq:approximationPowerOfRetractionCurves}, this scheme admits a local truncation error of order $O(\Deltat^3)$.

	All the retractions for the fixed-rank manifold presented in Section~\ref{s:retractionsAndDLRA}	 have the second-order property. In principle, all of them are suited to be used as $\Retr^{\II}$ in~\eqref{eq:dlraAFEscheme}, however, experiments reported in Section~\ref{s:numericalExperiments} suggest the orthographic retraction is the most convenient in terms of speed, accuracy and stability. 		
	
	\subsection{The \revision{Projected Ralston--Hermite} integration scheme}\label{ss:rh}
	
	As put forth in~\cite{mySecondPaper}, retractions can also be used to define a manifold curve with prescribed endpoints and endpoint velocities. For tangent bundle data points $(x_0,v_0),(x_1,v_1)\in T\mcal$ and some parameters $t_0<t_1$, we denote by $H$ the retraction-based Hermite (RH) interpolant defined by~\cite[Corollary 7]{mySecondPaper}, which satisfies
	\begin{equation}\label{eq:HermiteInterpolant}
		H(t_i) = x_i, \quad \dot H(t_i) = v_i,\quad i = 0,1.
	\end{equation} 
	We will employ the notation $H(t;(p_0,v_0,t_0),(p_1,v_1,t_1))$ to underline the dependence of $H$ on the interpolation data.
	The curve $H$ is constructed by a manifold extension of the well-known De Casteljau algorithm. But instead of using geodesic segments as building blocks of the procedure, the RH interpolant is defined with endpoint curves constructed with the use of a retraction and its local inverse, making the method more efficient and more broadly applicable. An efficient evaluation of the curve $H$ requires a retraction for which the inverse retraction admits a computationally affordable procedure to evaluate. 
	On the fixed-rank manifold, the orthographic retraction is a suitable candidate to construct the RH interpolant. 	
	
	As stated in~\cite[Theorem 17]{mySecondPaper}, the RH interpolant achieves $O(\Deltat^4)$ approximation error as $\Deltat\to0$ in the case where the interpolation data is sampled from \revision{a} smooth manifold curve at $t_0$ and $t_1 = t_0 + \Deltat$.  The \revision{Projected Ralston--Hermite} (\revision{PRH}) scheme aims at leveraging this approximation power to define a numerical integration update rule with small local truncation error. Let us derive the \revision{PRH} method for the initial value problem~\eqref{eq:ambientEquation} evolving in $\Rmn$. 

	\subsubsection{Euclidean analog}
	Given $A_0,A_1,V_0,V_1\in\Rmn$ and $t_0<t_1$, the Hermite interpolant $\tau \mapsto H_{\mathrm{P}}(\tau; (t_0,A_0,V_0), (t_1,A_1,V_1))$ is the unique third degree polynomial curve in $\Rmn$  which satisfies $H_\mathrm{P}(t_i) = A_i$, $H_\mathrm{P}'(t_i) = V_i$, for $i = 0,1$. The polynomial  curve $H_\mathrm{P}$ can be used to define the following multistep integration scheme for the initial value problem~\eqref{eq:ambientEquation}:
	\begin{equation}\label{eq:notoriouslyUnstableScheme}
		A_{k+2} = H_{\mathrm{P}}\pa{t_{k+2}; \pa{t_k,A_k, F(A_k)}, \pa{t_{k+1},A_{k+1}, F(A_{k+1})}}.
	\end{equation}
	Working out the expression for $H_{\mathrm{P}}$ allows rewriting the recursive relation~\eqref{eq:notoriouslyUnstableScheme} as
	\begin{align}
		A_{k+2} &= 5 A_k - 4 A_{k+1} + \Deltat\pa{2F(A_{k}) + 4 F(A_{k+1})}\\
		&= A_{k+1} + \Deltat \pa{2F(A_k) + 4 F(A_{k+1}) - 5\pa{\frac{A_{k+1} - A_{k}}{\Deltat}}}.
	\end{align}
	As pointed out in~\cite[\S III.3]{hairerSolveODEs1}, this scheme has a local truncation error $O(\Deltat^4)$, the highest possible order for explicit 2-step method using these terms. However, it is not zero-stable and thus does not produce a convergent scheme of order 3. Nevertheless, this update rule can be combined with suitably chosen intermediate steps to recover stability.
	Consider the family of multistep methods obtained by taking an intermediate forward Euler step of length $\alpha\in\pa{0,1}$ combined with the update rule~\eqref{eq:notoriouslyUnstableScheme} as 
	\begin{equation}\label{eq:ralstonAsEulerAndHermite}
		\begin{cases*}
			A_{k+\alpha} = A_{k} + \alpha \Deltat F(A_k),\\
		A_{k+1} = H_{\mathrm{P}}(t_{k+1}; \pa{t_k,A_k, F(A_k)}, \pa{t_{k+\alpha},A_{k+\alpha}, F(A_{k+\alpha})}).
		\end{cases*}
	\end{equation}
     These schemes are all part of the family of explicit Runge--Kutta methods as it can be shown that
	\begin{equation}
		A_{k+1} = A_k + \Deltat\pac{\pa{1 + \frac{1}{\alpha} - \frac{1}{\alpha^2}} F(A_k) + \pa{\frac{1}{\alpha^2}-\frac{1}{\alpha}}F(A_{k+\alpha})}.
	\end{equation}
	For any $\alpha\in\pa{0,1}$, the scheme satisfies the first-order conditions of Runge--Kutta methods. Choosing $\alpha$ to satisfy also the second-order conditions narrows down the family to the scheme with $\alpha = 2/3$. This scheme is an explicit second-order Runge--Kutta method known as the Ralston scheme and is defined by the following Butcher table. 
	\[
	\renewcommand\arraystretch{1.2}
	\begin{array}
		{c|cc}
		0\\
		\frac{2}{3} & \frac{2}{3}\\
		\hline
		& \frac{1}{4} &\frac{3}{4} 
	\end{array}
	\] 

	\subsubsection{The \revision{Projected Ralston--Hermite} (\revision{PRH}) integration scheme for DLRA}
	Expressed in the form~\eqref{eq:ralstonAsEulerAndHermite}, the update rule of the Ralston scheme consists of a forward Euler step followed by a Hermite interpolation step. We can leverage this formulation and the RH interpolant~\eqref{eq:HermiteInterpolant} to extend the Ralston method to the manifold setting. Let $\Retr$ denote any retraction and $\Retr^{\mathrm{I}}$ denote any retraction that can be used in practice to evaluate the RH interpolant (i.e. whose inverse can be efficiently computed). To indicate which retraction is used to construct the retraction-based Hermite interpolant $H$, we add it to its list of arguments. The \revision{Projected Ralston--Hermite} (\revision{PRH}) scheme for Problem~\ref{problem:projectedDynamicsProblem} is then defined as 
	\begin{equation}\label{eq:rhScheme}
		 \begin{cases*}
		 	Y_{k+2/3} = \Retr_{Y_{k}}\pa{\frac{2}{3}\Pi(Y_k) F(Y_k) },\\
		 	\revision{Y_{k+1}} = H (t_{k+1}; \pa{t_k,Y_k,\Pi(Y_k)F(Y_k)}, \pa{t_k + \frac{2}{3}\Deltat,Y_{k+2/3},\Pi(Y_{k+2/3})F(Y_{k+2/3})}, \Retr^{\mathrm{I}} ).
		 \end{cases*}
	\end{equation}
	A suitable candidate for both retractions is $\Retr = \Retr^{\mathrm{I}} = \Retr^{\mathrm{ORTH}}$, the orthographic retraction. As experiments in Section~\ref{s:numericalExperiments} suggest, this generalization to the manifold setting of the Ralston scheme maintains its second-order accuracy.
   	\subsubsection{The A\revision{PRH} integration scheme}\label{ss:arh}
  	For the sake of completeness, a third scheme can be obtained by combining the AFE and the \revision{PRH} schemes. Replacing the intermediate forward Euler step of the \revision{PRH} scheme with an AFE update defines what we call the Accelerated \revision{Projected Ralston--Hermite} scheme (A\revision{PRH}). It is defined by the recursive relation 
  	\begin{equation}\label{eq:arhScheme}
  		\begin{cases*}
  			Y_{k+2/3} = \Retr_{Y_k}^{\II}\bigg(\frac{2}{3}\Deltat \Pi(Y_k) F(Y_k) +  \frac{2\Deltat^2}{9}\ddot Y_{k}  \bigg),\\
  			\revision{Y_{k+1}} = H (t_{k+1}; \pa{t_k,Y_k,\Pi(Y_k)F(Y_k)}, \pa{t_k + \frac{2}{3}\Deltat,Y_{k+2/3},\Pi(Y_{k+2/3})F(Y_{k+2/3})}, \Retr^{\mathrm{I}} ),
  		\end{cases*}
  	\end{equation}
  	where $\ddot Y_k$ is given by~\eqref{eq:accelerationDLRAatStepK}.

	\section{Numerical experiments}\label{s:numericalExperiments}
	In this section, we illustrate the performances of the accelerated forward Euler (AFE) method, the \revision{Projected Ralston--Hermite} (\revision{PRH}) method and the accelerated \revision{Projected Ralston--Hermite} (A\revision{PRH}) method. Experiments were executed with Matlab 2022b on a laptop computer with Intel i7 CPU (1.8GHz with single-thread mode) with 8GB of RAM, 1MB of L2 cache and 8MB of L3 cache. The implementation leverages the differential geometry tools of the Manopt library~\cite{manoptPaper}. In particular, the orthographic retraction provided by  Manopt is used for the implementation of AFE, \revision{PRH} and A\revision{PRH}. An implementation of the KSL and KLS retractions as described by Algorithms~\ref{alg:retractionKSL} and~\ref{alg:retractionKLS} were added to the fixed-rank manifold factory. For the implementation of the projected Runge--Kutta method of~\cite{kieriVandereycken}, we also added an implementation of the truncated SVD extended retraction, accepting as input tangent vectors of arbitrary tangent spaces. \revision{See Table~\ref{tab:recapOfMethods} for a summary of the acronyms of the methods appearing in the numerical experiments. For the sake of completeness, we recall that an $s$-stage Projected Runge--Kutta (PRKs) method for given Butcher table, as provided in \cite{kieriVandereycken}, is defined as follows  
    \begin{equation*}
        \text{(PRKs) }
        \begin{cases}
        Y_{k+1} = \mathcal R(Y_k +h \sum_{j=1}^s b_j \Pi(\mathcal R(Z_j)) F(\mathcal R(Z_j)))\, , \\
        Z_j = Y_k + h\sum_{l=1}^{j-1} \Pi( \mathcal R(Z_l)) F(\mathcal R(Z_l)) \, ,
        \end{cases}
    \end{equation*}
    where $\mathcal R$ denotes the metric projection on the fixed-rank manifold $\mathcal M_r$. We refer to \cite[\S 5]{kieriVandereycken} for a detailed description of its efficient implementation, along with a recapitulation of the Butcher table for each $s$-stage method up to the third order.}
	
	\begin{table}[]
		\centering 
		\caption{Summary of the acronyms of the methods considered in the numerical experiments. }
		\begin{tabular}{|c|c|}
			\hline
			KSL & Projector-splitting integrator~\cite{absilOseledets} \\ \hline
			KLS & Unconventional integrator~\cite{unconventionalIntegrator} \\ \hline
			PRKs ($s = 1,2,3$) & Projected Runge--Kutta methods~\cite{kieriVandereycken} \\ \hline
			AFE & Accelerated forward Euler method, Section~\ref{ss:afe} \\ \hline
			\revision{PRH} & \revision{Projected Ralston--Hermite} method, Section~\ref{ss:rh} \\ \hline
			A\revision{PRH} & Accelerated \revision{Projected Ralston--Hermite} method, Section~\ref{ss:arh}\\ \hline
		\end{tabular}
		\label{tab:recapOfMethods}
	\end{table}
	
	\subsection{Differential Lyapunov equation}\label{ss:lyapunovExperiments}
	The modeling error~\eqref{eq:modelingError} introduced by DLRA is associated with the normal component of the vector field of the original differential equation. The effect of this error on the performance of DLRA integrators can be assessed by considering a class of matrix differential equations, the so-called differential Lyapunov equations~\cite[\S 6.1]{bartUschmajewChapter}, which take the form
	\begin{equation}\label{eq:diffLyapunovEquation}
		\begin{cases*}
			A'(t) = L A(t) + A(t) L^\top  + Q,\quad t\in\pac{0,T},\\
			A(0) = A_0,
		\end{cases*}
	\end{equation}
	for some $A_0, L, Q\in \Rnn$. Indeed, if $A_0$ has rank exactly $r$ and the matrix $Q$ is zero, then $A(t)$ is also of rank-$r$ for every $t\in\pac{0,T}$~\cite[Lemma 1.22]{helmkeMoore94}. Therefore, the norm of $Q$ is proportional to the modeling error. 
	
	\begin{figure}
		\centering
		\begin{minipage}{0.475\linewidth}
			\centering
			\centering \includegraphics[trim=0cm 0cm 0cm 0.75cm, clip, width=0.975\linewidth]{ 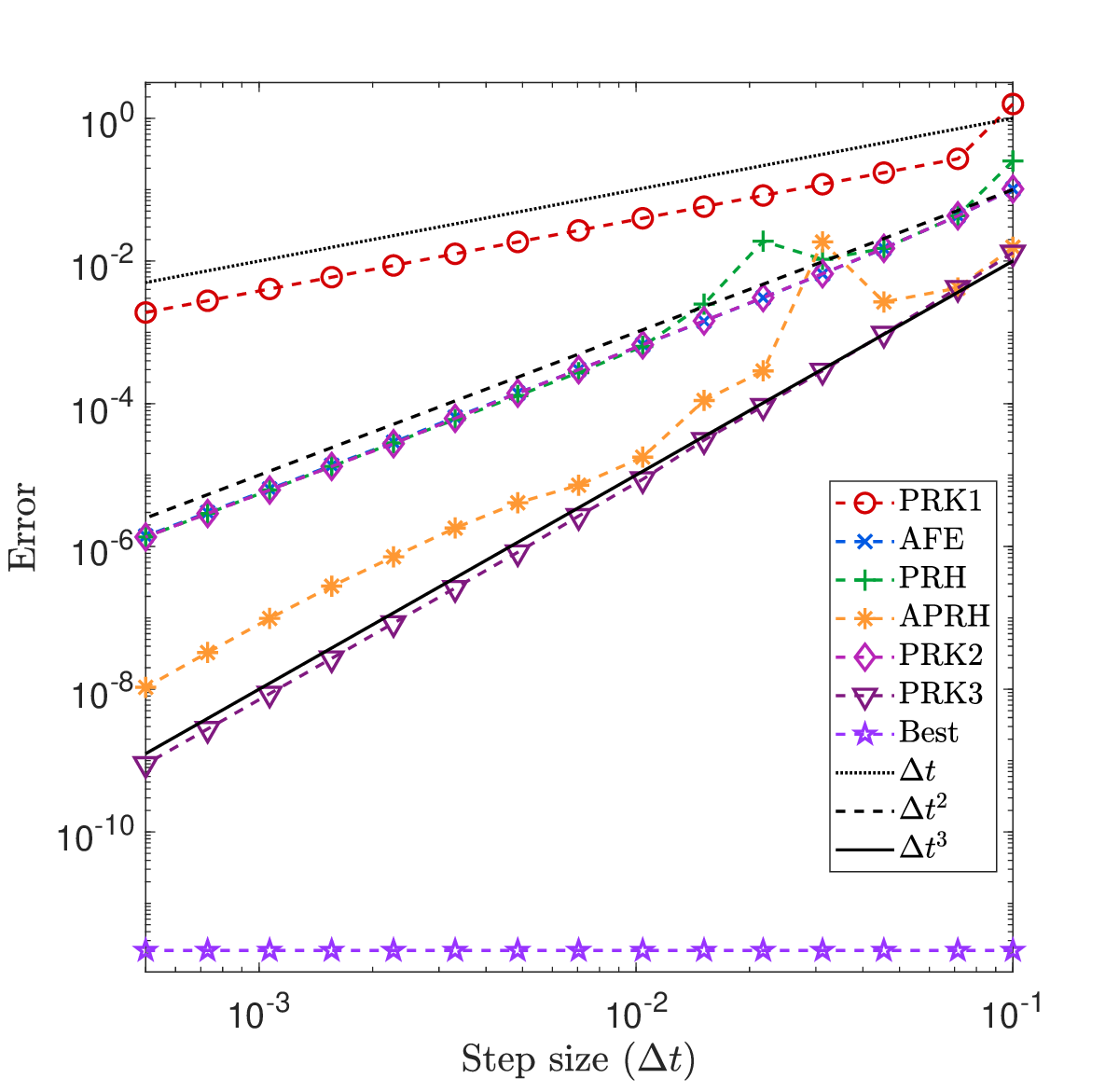}\vspace*{-0.0cm}
			\centering \includegraphics[trim=0cm 0cm 0cm 0.0cm, clip, width=0.975\linewidth]{ 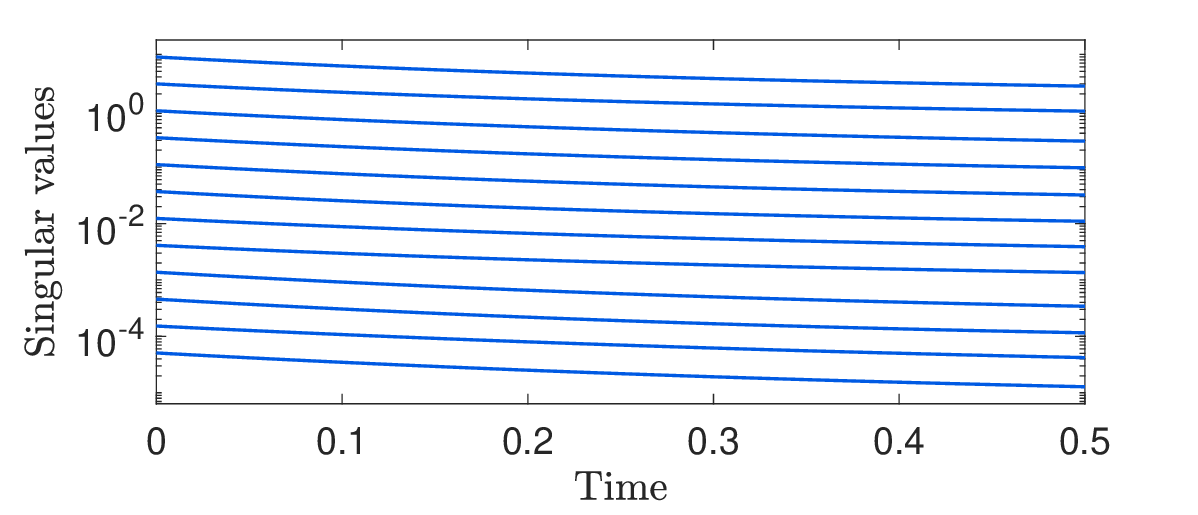}	
			\subcaption{$\norm{Q}_{\mathrm{F}} = 0$.}
		\end{minipage}
		\begin{minipage}{0.475\linewidth}
			\centering
			\centering \includegraphics[trim=0cm 0cm 0cm 0.75cm, clip, width=0.975\linewidth]{ 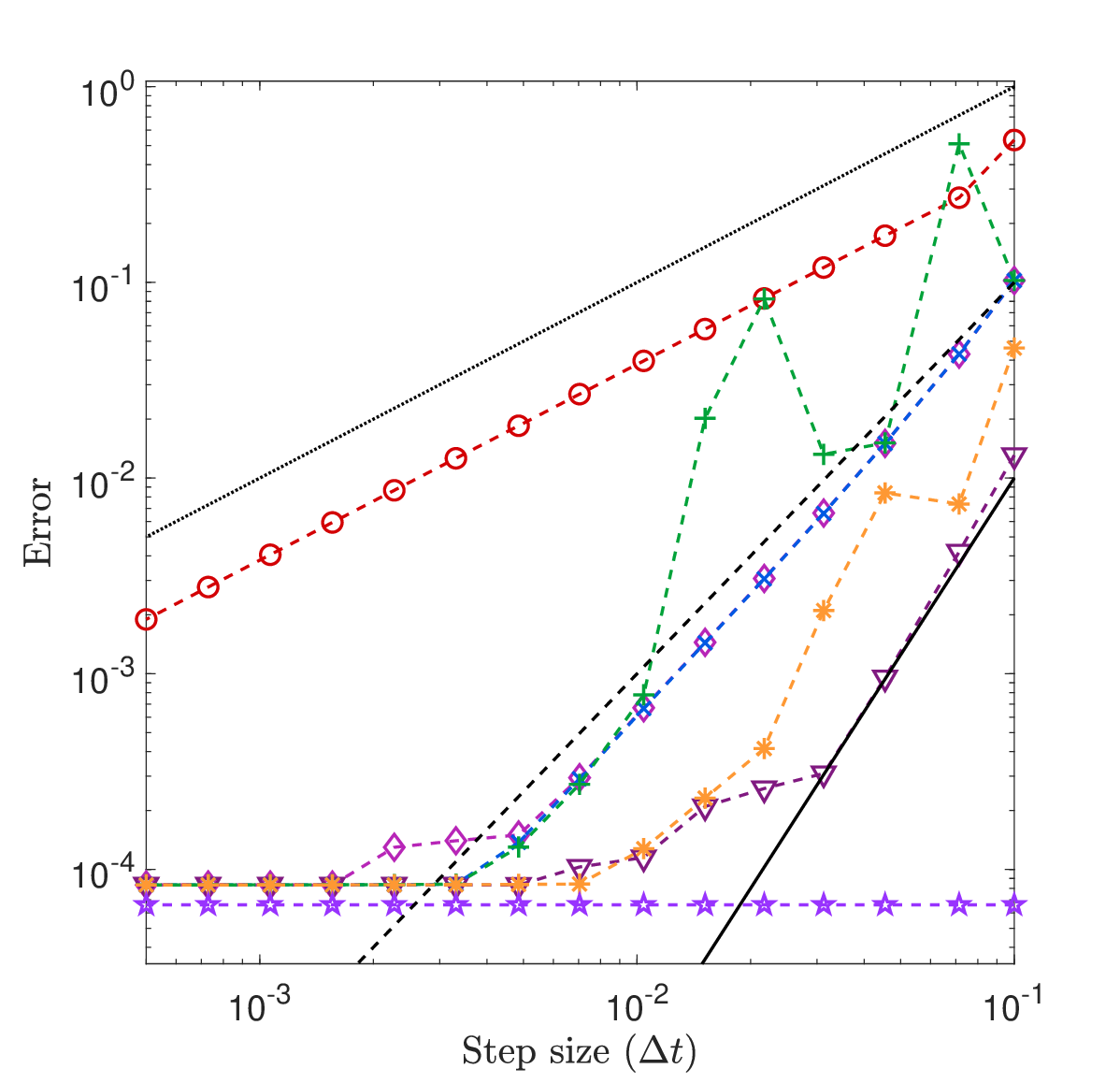}\vspace*{-0.0cm}
			\centering \includegraphics[trim=0cm 0cm 0cm 0.0cm, clip, width=0.975\linewidth]{ 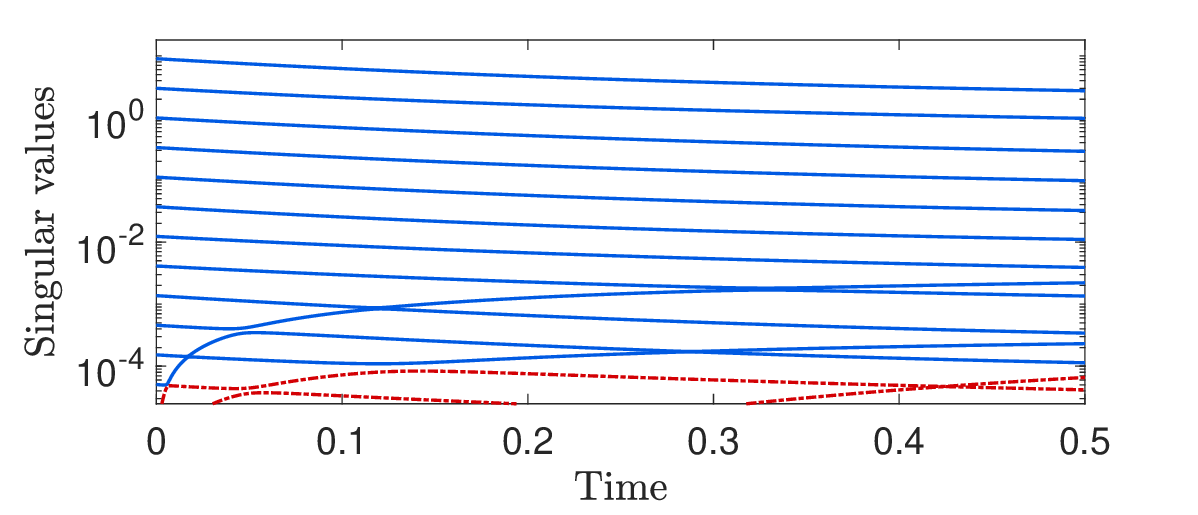}	
			\subcaption{$\norm{Q}_{\mathrm{F}} = 0.01$.}
		\end{minipage}
		
		\begin{minipage}{0.475\linewidth}
			\centering
			\centering \includegraphics[trim=0cm 0cm 0cm 0.75cm, clip, width=0.975\linewidth]{ 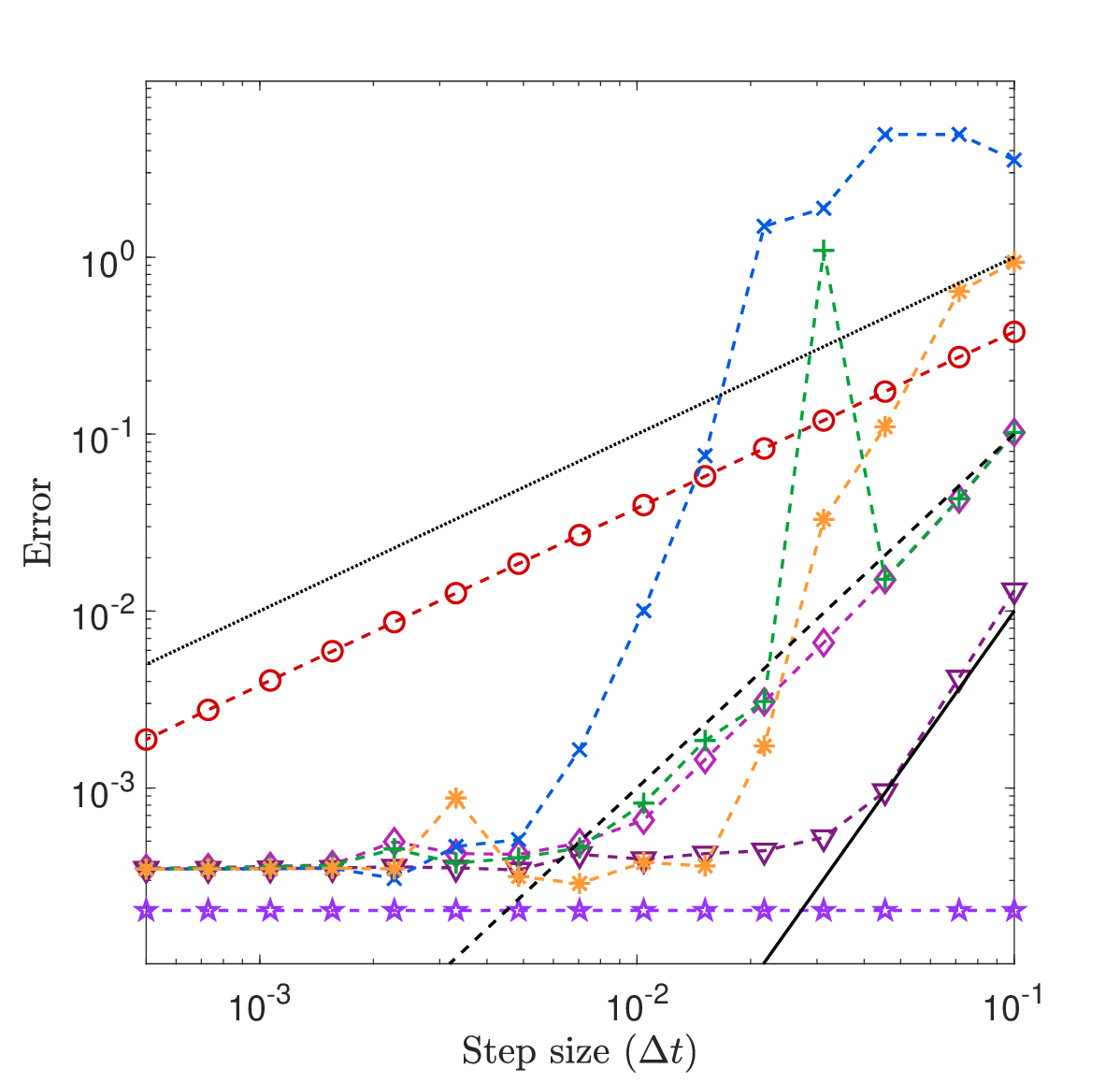}\vspace*{-0.0cm}
			\centering \includegraphics[trim=0cm 0cm 0cm 0.0cm, clip, width=0.975\linewidth]{ 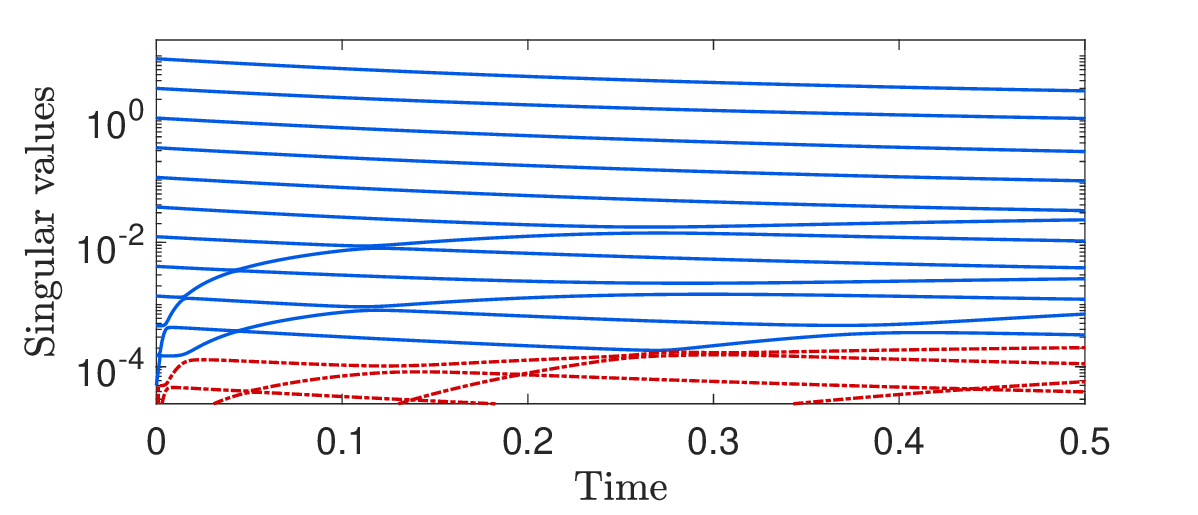}	
			\subcaption{$\norm{Q}_{\mathrm{F}} = 0.1$.}
		\end{minipage}
		\begin{minipage}{0.475\linewidth}
			\centering
			\centering \includegraphics[trim=0cm 0cm 0cm 0.75cm, clip, width=0.975\linewidth]{ 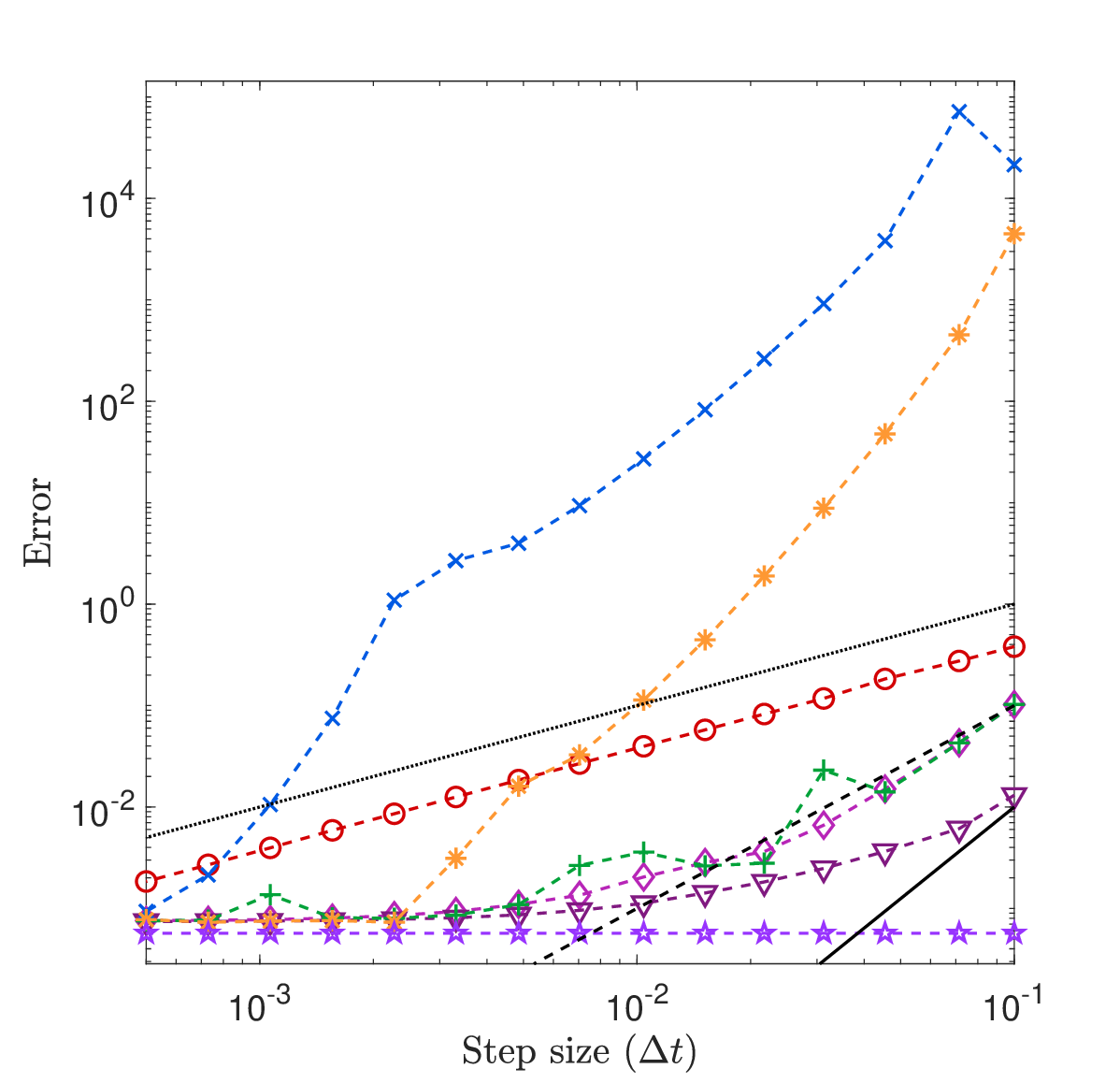}\vspace*{-0.0cm}
			\centering \includegraphics[trim=0cm 0cm 0cm 0.0cm, clip, width=0.975\linewidth]{ 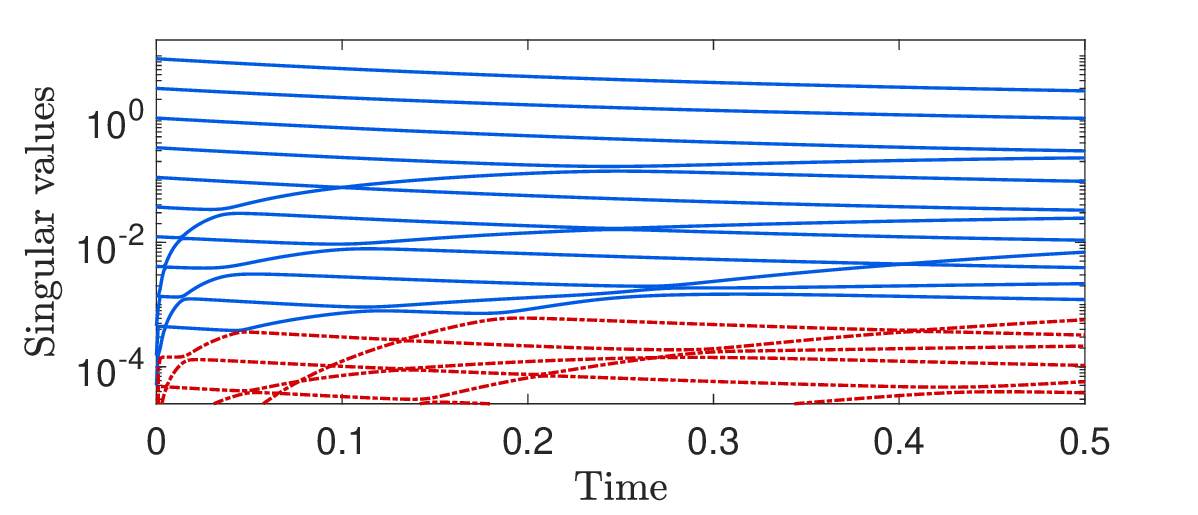}	
			\subcaption{$\norm{Q}_{\mathrm{F}} = 1$.}
		\end{minipage}
		\caption{Convergence of the error at final time for different DLRA integration schemes applied to the Lyapunov equation~\eqref{eq:diffLyapunovEquation} with sources terms of different norms. The top plot in each panel is the final error $\norm{Y_{\Deltat}(T)-A(T)}_2$ versus the step size ${\Deltat}$, where $Y_{\Deltat}$ is the approximation of $A$ obtained with a step size ${\Deltat}$. The bottom plot reports the evolution of the singular values of the reference solution over time. The red dashed curves correspond to discarded singular values.}
		\label{fig:lyapunovResultsConvergence}
	\end{figure}

	\begin{table}
		\centering
		\caption{Average time in milliseconds per step for the experiments of Figure~\ref{fig:lyapunovResultsConvergence}-(d).}
		\begin{tabular}{|c|c|c|c|c|c|c|c|}
			\hline
			PRK1 & KSL & KLS & AFE & \revision{PRH} & A\revision{PRH} & PRK2 & PRK3 \\ \hline
			$5.27$ & $5.41$ & $5.88$ & $7.71$ & $12.62$ & $17.38$ & $10.41$ & $14.16$ \\ \hline
		\end{tabular}
		\label{tab:lyapunovResultsTimePerIteration}
	\end{table}
	
	In the following, $L$ is the discretized 1D Laplace operator on a uniform grid, that is, $L$ is the tridiagonal matrix with $-2$ on the main diagonal and $1$ on the first off-diagonals. For the source term, we take $Q = \eta \tilde Q / \normSmall{\tilde Q}_{\mathrm{F}}$ for some $\eta>0$ and where $\tilde Q$ is a full rank matrix generated from its singular value decomposition with randomly chosen singular vectors and prescribed singular values, decaying as $\sigma_i(\tilde Q) = 10^{2-i}$, for $i = 1,\dots, n$. The initial condition is taken to be of rank exactly $r$ and is also assembled from a randomly generated singular value decomposition with a prescribed geometric decay of non-zero singular values: $\sigma_i(A_0) = 3^{2-i}$, for all $i = 1,\dots,r$, and $\sigma_i(A_0) = 0$, for all $i = r+1,\dots,n$. 
	
	In Figure~\ref{fig:lyapunovResultsConvergence}, we report the results with $n = 100$ and $r = 12$ of the following experiments. For different values of $\eta$, we numerically integrate the rank-$r$ DLRA differential equation~\eqref{eq:odeDLRA} applied to~\eqref{eq:diffLyapunovEquation} with different numerical schemes and different time steps up to $T = 0.5$. A reference solution to the ambient equation~\eqref{eq:diffLyapunovEquation} is found by using the MATLAB routine \texttt{ode45} between each time step, for a time step that is the smallest among those considered for the numerical integrators. We then plot as a function of the step size the 2-norm discrepancy between the reference solution at final time and its approximation obtained by numerical integration. The numerical results for the KSL and the KLS scheme were very similar to the ones of PRK1. Hence, they were omitted not to overcrowd the plots.

	The panels of Figure~\ref{fig:lyapunovResultsConvergence} correspond to the cases (a) $\eta = 0$, (b) $\eta = 0.01$, (c) $\eta = 0.1$, (d) $\eta = 1.0$. When the source term is zero, the reference solution is also of rank exactly $r$, as can be seen from the value of the best approximation error in panel (a). In this regime, the AFE and the \revision{PRH} scheme both exhibit $O({\Deltat}^2)$ error convergence, while the A\revision{PRH} scheme seem to reach an asymptotic $O({\Deltat}^3)$ trend. The trade-off between accuracy and computational effort that can be seen in Figure~\ref{fig:lyapunovResultsEffort}-(a) shows that in this simple setting, the \revision{PRH}, AFE and A\revision{PRH} schemes have comparable performances to PRK2.
	Turning on the source term determines a non-negligible best approximation error due to the growth of singular values that were initially zero, as can be seen in the bottom plots of panels (b), (c) and (d). The larger the source term's norm, the faster and the greater these singular values grow. Then, the numerical integrators converge to the exact solution of the projected system and so the error with respect the ambient solutions stagnates at a value slightly higher than the best 2-norm approximation. While the \revision{PRH} scheme preserve the $O({\Deltat}^2)$ trend up to some oscillations as $\eta$ increases, the AFE and A\revision{PRH} schemes seem to suffer instability when the normal component of the vector field is too large. 
	In this more realistic scenario where the normal component of the vector field is non-negligible, only the \revision{PRH} scheme remains comparable to PRK2 in terms of the trade-off between accuracy and effort, see Figure~\ref{fig:lyapunovResultsEffort}-(b) and Table~\ref{tab:lyapunovResultsTimePerIteration}.

	\begin{figure}
		\centering
		\begin{minipage}{0.475\linewidth}
			\centering \includegraphics[trim=0cm 0cm 0cm 0cm,clip, width = 0.9\linewidth]{ 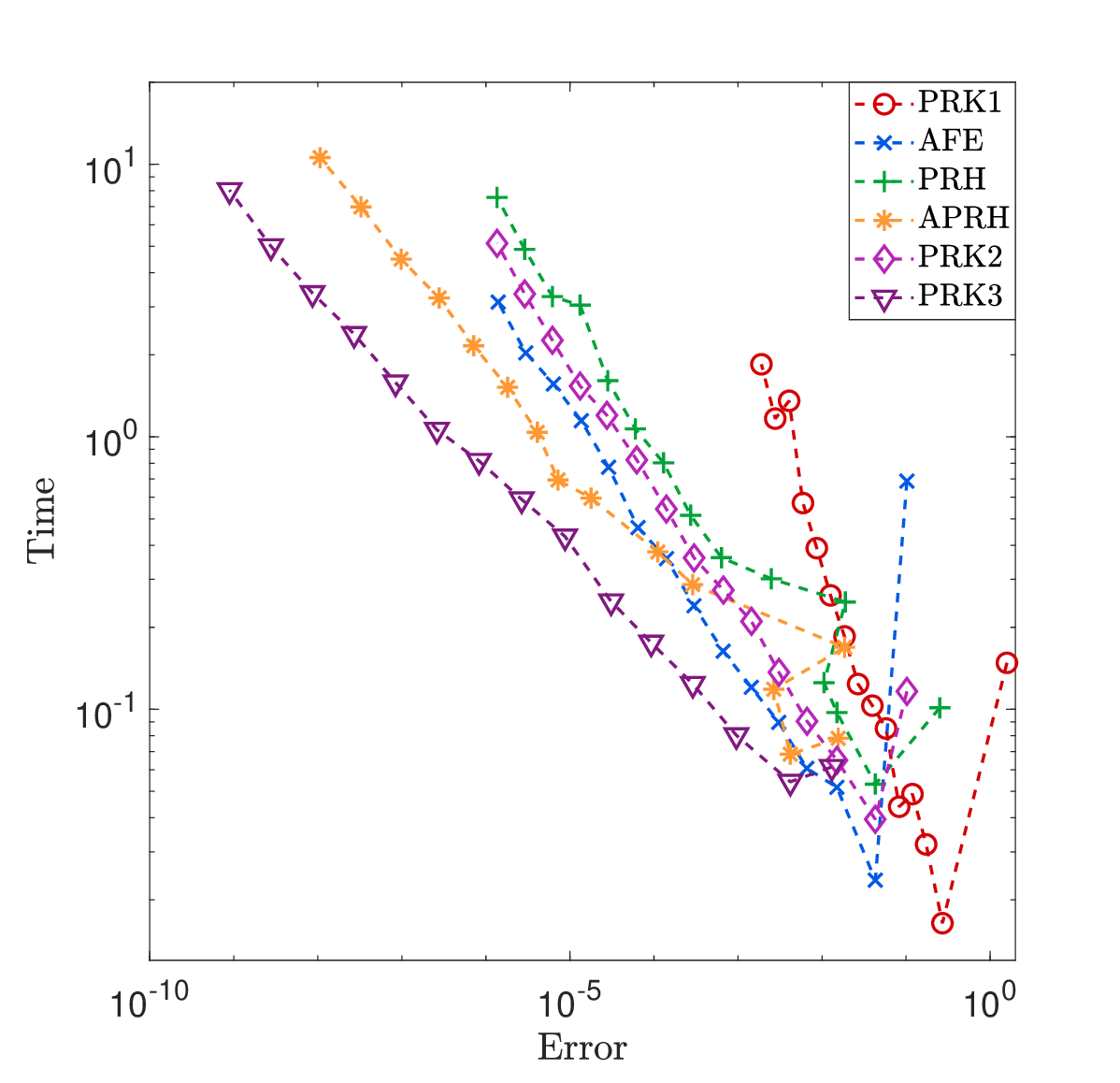}
			\subcaption{$\norm{Q}_{\mathrm{F}} = 0$.}
		\end{minipage}
		\begin{minipage}{0.475\linewidth}
			\centering \includegraphics[trim=0cm 0cm 0cm 0cm clip, width = 0.9\linewidth]{ 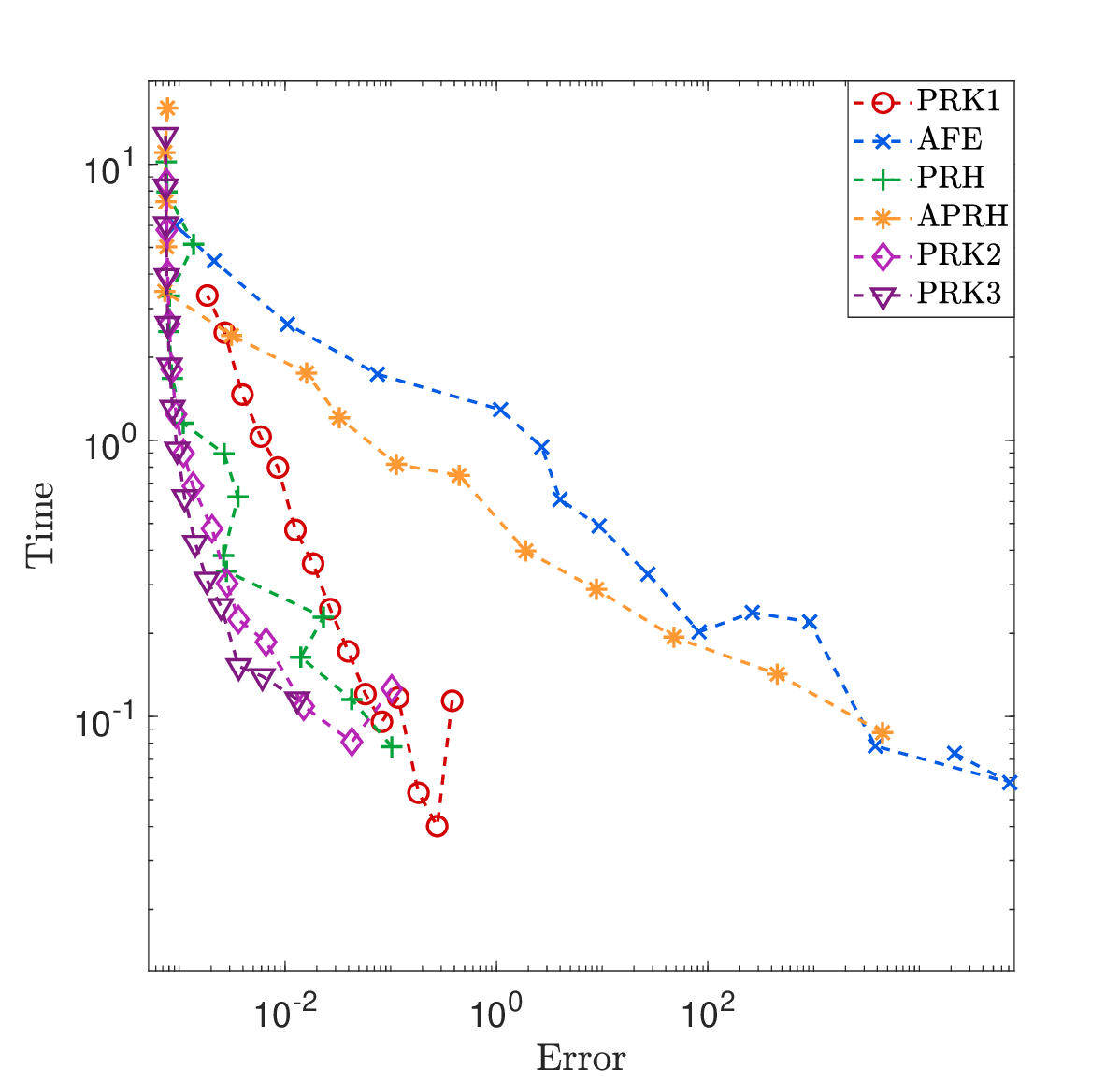}
			\subcaption{$\norm{Q}_{\mathrm{F}} = 1$.}
		\end{minipage}
		\caption{Computational effort in terms of wall-clock time against the error with respect to the reference solution achieve by different numerical integration schemes and with different step sizes. The results were collected from the same experiment of panels (a) and (d) of Figure~\ref{fig:lyapunovResultsConvergence}.}
		\label{fig:lyapunovResultsEffort}
	\end{figure}

	\subsection{Robustness to small singular values}\label{ss:robustness-to-small-singular-values}
	A fundamental prerequisite for competitive DLRA integrators is to be resilient to the presence of small singular values in the solution. A detailed discussion on the topic can be found in~\cite{kieriLubichWallach}. In applications, very often the ambient solution admits an exponential decay of singular values. Hence, a good low-rank approximation is possible but the occurrence of small singular values is inevitable for DLRA to be accurate: a rank-$r$ approximation of the solution must match the $r$th singular value of the ambient solution, which is small if the approximation error is small.

	The smaller the singular values of the solution, the greater the stiffness of the DLRA differential equation~\eqref{eq:odeDLRA}: the Lipschitz constant of the vector field $F$ gets multiplied by the Lipschitz constant of the tangent space projection, which is inversely proportional to the smallest non-zero singular value of the base point~\cite[Lemma 4.2]{kochLubich}. Accordingly, standard numerical integration methods fail to provide a good approximation unless the step size is taken to be very small. Projector-splitting integrators for DLRA avoid such step size restrictions as the error convergence results are independent of the smallest non-zero singular value of the approximation. These \revision{schemes} are commonly qualified as robust to small singular values. The robustness property was shown for the KSL scheme~\cite[Theorem 2.1]{kieriLubichWallach} and the KLS scheme~\cite[Theorem 4]{unconventionalIntegrator}. The PRK methods also enjoy the robustness property~\cite[Theorem 6]{kieriVandereycken}. In the following, we experimentally study the robustness of the AFE, \revision{PRH} and A\revision{PRH} integration schemes to the presence of small singular values. 
	
	The typical setting to assess the stability to small singular values of a given integration scheme, considers a matrix curve $t\in\pac{0,T}\revision{\mapsto} A(t)\in\Rnn$ of the form
	\begin{equation}
		A(t) = U(t)\Sigma(t)V(t)^\top 
		\label{eq:ambientCurve}
	\end{equation}
	with 
	\begin{equation}
		U(t):=\exp\pa{t\Omega_U}, \quad \Sigma(t):=\exp(t)D,\quad V(t):= \exp(t\Omega_V),
	\end{equation}
	for some $n\times n$ skew-symmetric matrices $\Omega_U,\Omega_V$ and a diagonal matrix $D = \mathrm{diag}(\sigma_1,\dots,\sigma_n)$, for a positive and geometrically decaying sequence $\sigma_i$. A rank-$r$ approximation of this curve is reconstructed by numerically integrating with the given scheme the DLRA equation~\eqref{eq:odeDLRA} where the scalar field $F$ is replaced by the exact derivative of the ambient curve~\eqref{eq:ambientCurve}:
	\begin{align}
		A'(t) &= U(t) \Big(\Omega_U\Sigma(t) + \Sigma(t) + \Sigma(t) \Omega_V^\top \Big) V(t)^\top .
		\label{eq:ambientDerivative}
	\end{align}
	The approximation error at final time is constituted mainly of the integration error which can be reduced by decreasing the step size, and the modeling error is affected only by the choice of $r$. A scheme is said to be robust to small singular values, if the integration error is independent of the choice of $r$. In practice, one must observe that the trend of the error as a function of the step size is unaffected by the choice of $r$ for step sizes where the modeling error is negligible compared to the integration error.
	
	Figure~\ref{fig:stabilityToSmallSingularValues} shows the results for the experiment described in the previous paragraph with a curve of the form~\eqref{eq:ambientCurve} with randomly generated $\Omega_U$ and $\Omega_V$, initial singular values $\sigma_i = 2^{-i}$ and $n=100$. The panels from left to right corresponds respectively to the AFE, the \revision{PRH} and the A\revision{PRH} schemes. Note that for the AFE and the A\revision{PRH} schemes, we use the exact expression for the second derivative of~\eqref{eq:ambientCurve} given by
	\begin{align}
		A''(t) &= U(t) \Big(\Omega_U^2\Sigma(t) + \Sigma(t) + \Sigma(t) (\Omega_V^2)^\top \\ &\quad\quad\quad + 2\Omega_U\Sigma(t) + 2\Omega_U\Sigma(t) \Omega_V^\top  + 2\Sigma(t) \Omega_V^\top   \Big) V(t)^\top .
	\end{align}
	The results for AFE show the ideal outcome: the error curves for increasing values of $r$ are superimposed until the modeling error plateau determined by the value of $r$ is reached. These results empirically suggest that the AFE integration scheme is robust to small singular values. On the other hand, the \revision{PRH} and A\revision{PRH} \revision{schemes} which rely on retraction-based Hermite interpolation suffer from small singular values. Panels (b) and (c) of Figure~\ref{fig:stabilityToSmallSingularValues} exhibit the same oscillatory convergence trend that could be observed for both schemes in the experiments on the differential Lyapunov equation in Section~\ref{ss:lyapunovExperiments}.

	\begin{figure}
		\begin{minipage}{0.325\linewidth}
			\centering
			\centering \includegraphics[trim = 0.1cm 0.2cm 1.15cm 1cm, clip, width=\linewidth]{ 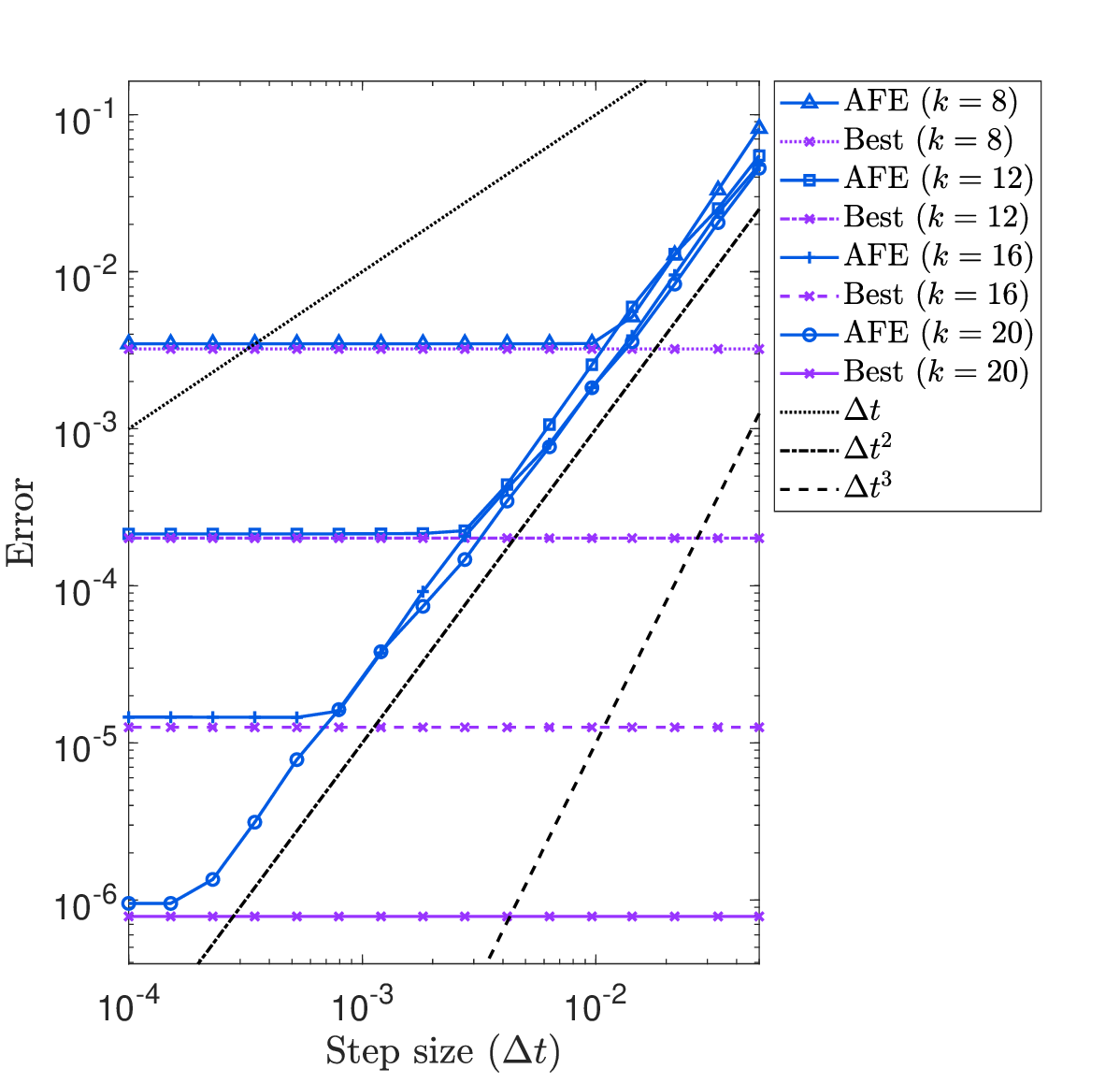}
			\subcaption{AFE scheme}
			\label{fig:robustToSmallSV_AFE}
		\end{minipage}
		\begin{minipage}{0.325\linewidth}
			\centering
			\centering \includegraphics[trim = 0.1cm 0.2cm 1.15cm 1cm, clip,width=\linewidth]{ 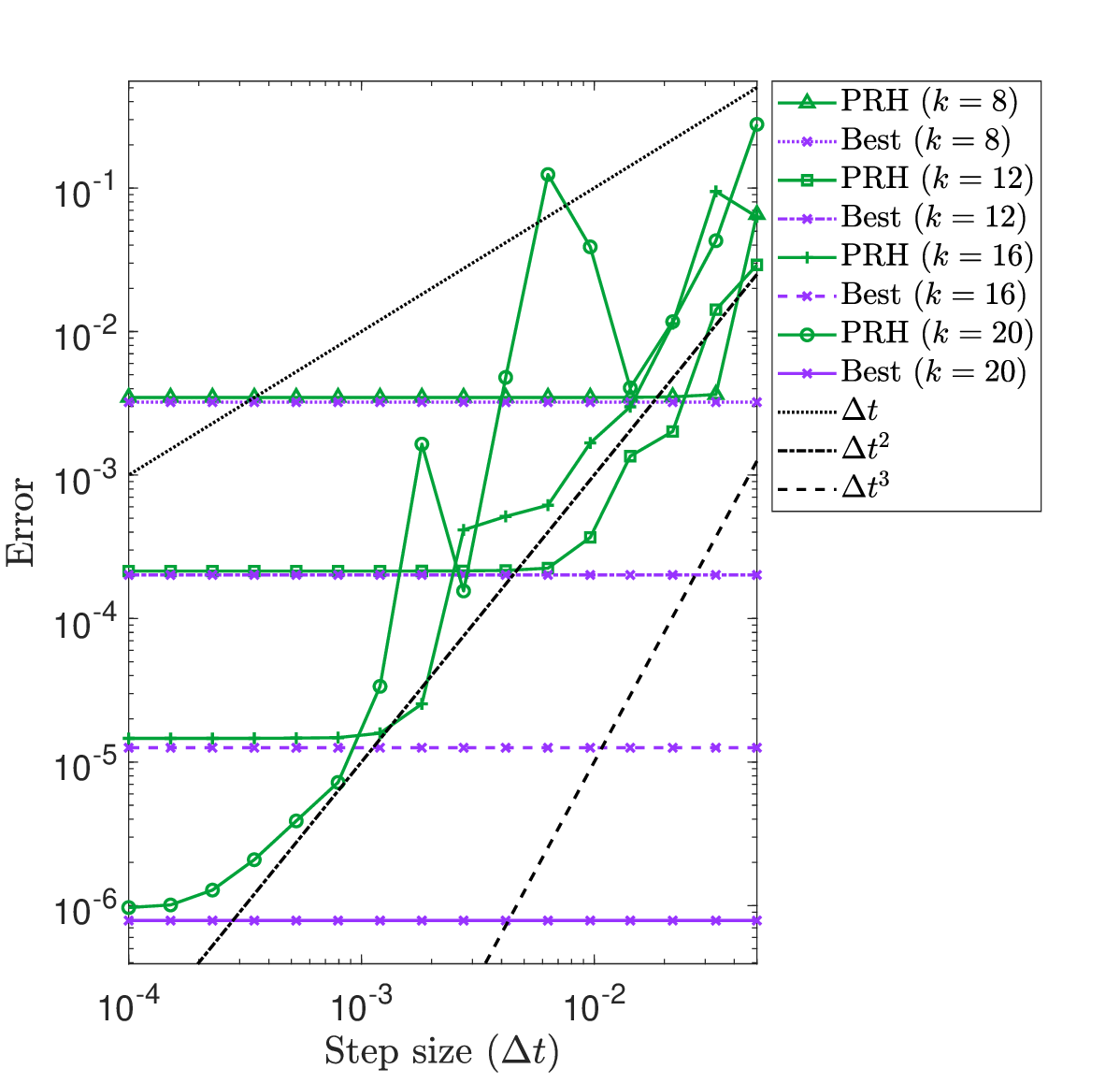}
			\subcaption{\revision{PRH} scheme}
			\label{fig:robustToSmallSV_RH}
		\end{minipage}
		\begin{minipage}{0.325\linewidth}
			\centering
			\centering \includegraphics[trim = 0.1cm 0.2cm 1.15cm 1cm, clip,width=\linewidth]{ 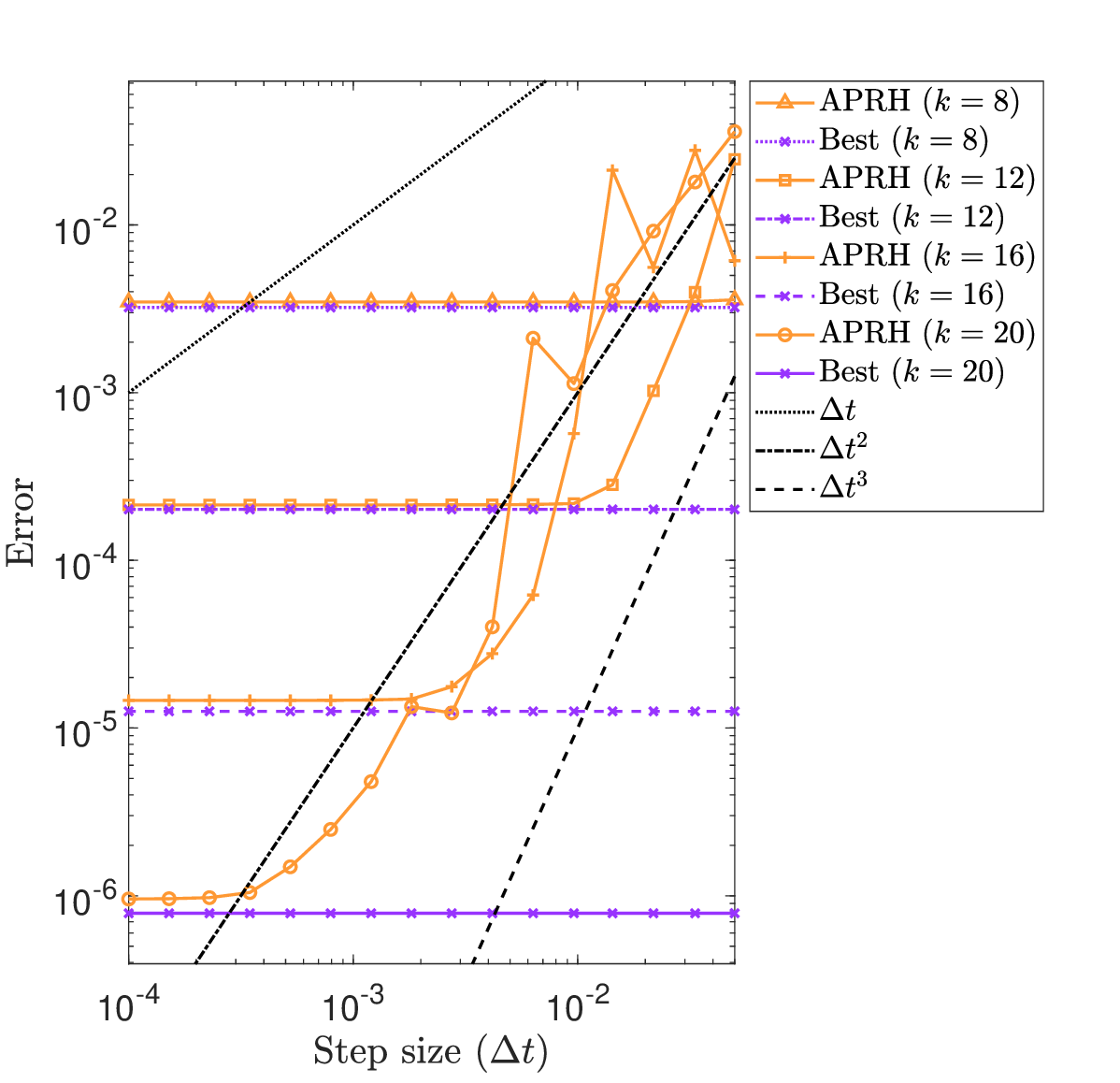}
			\subcaption{\revision{APRH} scheme}
			\label{fig:robustToSmallSV_ARH}
		\end{minipage}
		\caption{Error at final time $\norm{Y_{\Deltat}(T)-A(T)}_2$ versus the step size ${\Deltat}$ of DLRA integration applied to \eqref{eq:ambientDerivative} for reconstructing the curve~\eqref{eq:ambientCurve} for different values of rank.}
		\label{fig:stabilityToSmallSingularValues}
	\end{figure}
	
	A partial explanation for the oscillatory behavior observed in panels (b) and (c) of Figure~\ref{fig:stabilityToSmallSingularValues} for the \revision{PRH} and the A\revision{PRH} schemes comes from studying robustness of the retraction-based Hermite interpolant~\eqref{eq:HermiteInterpolant} to the presence of small singular values at the interpolation points. Consider the following experiment. Take $Y_{0}\left(\sigma_{r}\right) \in \mathcal{M}_{r} \subset \mathbb{R}^{n \times n}$ with $n=100$ , $r=12$ defined by 
	\begin{equation*}
		Y_0(\sigma_r) = U_0 \mathrm{diag}(1,\dots,\sigma_r) V_0^\top 
	\end{equation*}
	for some randomly generated orthogonal matrices $U_0$ and $V_0$ and with $\sigma_i$ logarithmically spaced on the interval $[\sigma_r,1]$, for some $\sigma_r\leq1$. To obtain the second interpolation point, we first move away from $Y_0$ with the orthographic retraction along a random tangent vector $Z\in T_Y\mcal_r$ such that $\norm{Z}_{\mathrm{F}}$ = 1 to get $\tilde Y_1 = \Retr_{Y_0(\sigma_r)}(Z)$. Then, \revision{the} second interpolation point is $Y_1$, obtained from $\tilde Y_1$ by replacing its singular values with
	\begin{equation*}
		\sigma_i(Y_1) = \sigma_i(Y_0)(1 + \xi_i),
	\end{equation*}
	for some random $\xi_i$ drawn from a uniform distribution on $\pac{1/2,2}$. This way, the singular values decay of both $Y_0$ and $Y_1$ mimic a situation encountered in one step of the \revision{PRH} and A\revision{PRH} integration schemes, when the smallest singular value of the current approximation is of the order of $\sigma_r$.  
	Then, we randomly generate $Z_0\in T_{Y_0}\mcal_r$ and $Z_1\in T_{Y_1}\mcal_r$ with $\|Z_0\| = \|Z_1\| = 1$ and form the retraction-based interpolant~\eqref{eq:HermiteInterpolant} given by
	\begin{equation}
		H(\tau) = H(\tau; \,\pa{0,Y_0,Z_0}, \,\pa{1,Y_1,Z_1}),\quad\tau\in\pac{0,1}.
		\label{eq:stressedHermiteInterpolant}
	\end{equation}
	 For different values of the smallest singular value $\sigma_r$, we measure the discrepancies $\normbig{Z_0 - \dot H(0)}_{\mathrm{F}}$ and $\normbig{Z_1 - \dot H(1)}_{\mathrm{F}}$, where derivatives of $H$ are obtained by finite differences. 
	The experiment is repeated for each $\sigma_r$ on $100$ randomly generated instances and the error distribution is plotted against $\sigma_r$ in Figure~\ref{fig:robustnessOfRetractionHermiteToSmallSV}.  These results unequivocally indicate the fragility of retraction-based Hermite interpolant on the fixed-rank manifold when small singular values are present in the interpolation points. As $\sigma_r$ decrease, the velocity error in $\tau = 0$ increases, and even more severely in $\tau = 1$. The fact that the error is non-negligible even for moderately small values of $\sigma_r$ suggests the \revision{PRH} and A\revision{PRH} integration schemes may occasionally employ inaccurate retraction-based Hermite interpolants. This may contribute to the oscillatory behavior of the error observed \revision{PRH} and A\revision{PRH} in Figures~\ref{fig:lyapunovResultsConvergence}~and~\ref{fig:stabilityToSmallSingularValues}.
	\begin{figure}
		\begin{minipage}{0.495\linewidth}
			\centering \includegraphics[width=\linewidth, trim=0.5cm .1cm 1cm .1cm, clip]{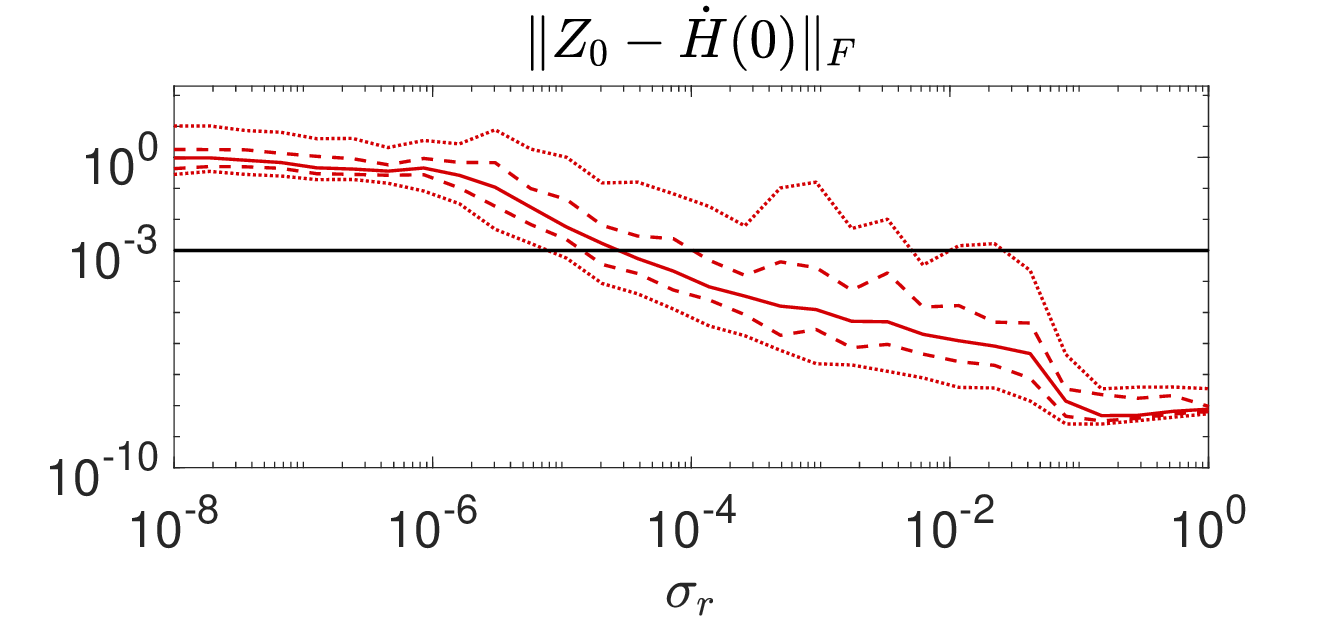}
		\end{minipage}
		\begin{minipage}{0.495\linewidth}
			\centering \includegraphics[width=\linewidth, trim=0.5cm .1cm 1cm .1cm, clip]{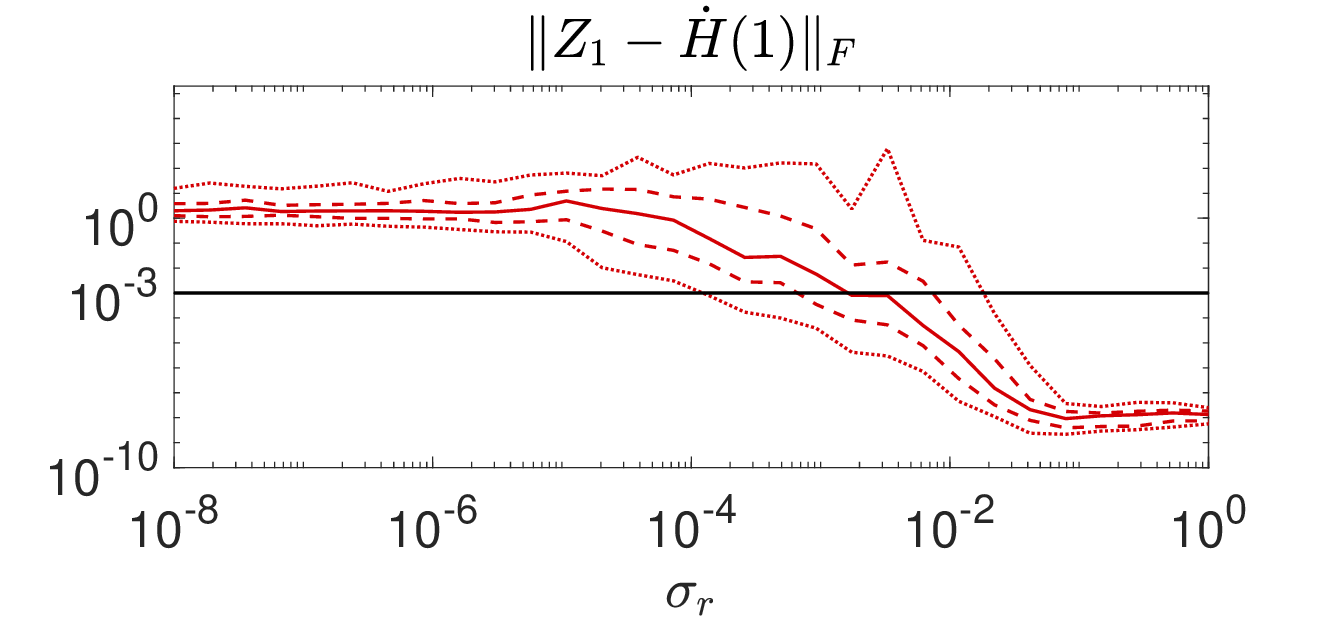}
		\end{minipage}
		\caption{Robustness to small singular values of the retraction-based Hermite interpolant~\eqref{eq:stressedHermiteInterpolant}. The solid line is the median of the error over 100 randomly generated instances for each value of $\sigma_r$ while the dashed and dotted lines correspond to the percentiles $\pac{0.05,0.25,0.75,0.95}$ of the sampled error.}
		\label{fig:robustnessOfRetractionHermiteToSmallSV}
	\end{figure}

	\section{Conclusion}\label{s:conclusion}
	This work contributes to strengthening the connection between retractions and numerical integration methods for manifold ODEs and especially DLRA techniques. In particular, we show that the so-called unconventional integration scheme~\cite{unconventionalIntegrator} defines a second-order retraction which approximates up to high-order terms the orthographic retraction. It remains an open question whether the same observation can be made for the recently proposed parallelized version of KLS~\cite{ceruti2023parallel}.

	 We also derive three numerical integration schemes expressed in terms of retractions and showcase their performance on classic problem instances of DLRA. The derivation and the numerical results show that the methods can achieve second-order error convergence with respect to the time integration step. However, the methods have shown mixed results. 
	 While the AFE and the A\revision{PRH} schemes exhibit instability in the presence of large normal components of the ambient vector field, the \revision{PRH} scheme appears more resilient to this aspect. On the other hand, the occurrence of small singular values in the approximation had no apparent effect on the  performance of AFE but for the \revision{PRH} and A\revision{PRH} methods, small singular values may explain occasional deviations from the expected second-order convergence behavior. \revision{We observe that the PRH scheme delivers similar performance, both with respect to computational time and accuracy, compared to its existing counterpart, PRK2. However, the high-order accelerated version, APRH, is found to be less favorable compared to analogous schemes such as PRK3. This is largely due to the additional computational cost incurred by the Weingarten map.}
		
	For other low-rank tensor formats, such as the Tucker or the tensor-train formats, retractions have also been proposed~\cite{steinlechnerTucker,steinlechnerTT}. However, to the best of our knowledge no retraction with an efficiently computable inverse retraction is known and the orthographic retraction has remained elusive due to the complexity of the normal space structure for these manifolds. Yet, the KLS scheme has been extended to low-rank tensor manifolds~\cite[\S 5]{unconventionalIntegrator}. Hence, assuming the connection with the orthographic retraction carries over to the tensor setting, it may be possible to retrieve the orthographic retraction for such tensor manifolds as a small perturbation of the KLS update. Then, the possibility to easily compute the inverse orthographic retraction would enable using also in the case of low-rank tensor manifolds the retraction-based endpoint curves and the numerical integration schemes presented in this work. 

	\bibliography{bibliography}

\end{document}